\theoremstyle{plain}
\newtheorem{theorem}{THEOREM}[section]
\newtheorem{proposition}{PROPOSITION}[section]
\newtheorem{lemma}{LEMMA}[section]
\newtheorem{corollary}{COROLLARY}[section]
\theoremstyle{definition}
\newtheorem{definition}{DEFINITION}[section]
\newtheorem{remark}{Remark}[section]
\numberwithin{equation}{section}
\newcommand{\ga}{\alpha}
\newcommand{\gd}{\delta}
\newcommand{\gD}{\Delta}
\newcommand{\gl}{\lambda}
\newcommand{\gm}{\mu}
\newcommand{\gn}{\nu}
\newcommand{\gs}{\sigma}
\newcommand{\go}{\omega}
\newcommand{\gO}{\Omega}
\newcommand{\ov}{\overline}
\newcommand{\de}{\partial}
\newcommand{\R}{\mathbb {R}}
\newcommand{\RN}{\R ^N}
\newcommand{\Rm}{\R ^m}
\newcommand{\ldue}{\textbf{L}^2}
\newcommand{\huno}{\textbf{H}_0^1}
\newcommand{\be}{\begin{equation}}
\newcommand{\ee}{\end{equation}}
\begin{document}

\title[Symmetry for cooperative elliptic systems]
{A symmetry result for semilinear cooperative elliptic systems}
\author[Damascelli]{Lucio Damascelli}
\address{ Dipartimento di Matematica, Universit\`a  di Roma 
" Tor Vergata " - Via della Ricerca Scientifica 1 - 00173  Roma - Italy.} 
\email{damascel@mat.uniroma2.it}
\author[Gladiali]{Francesca  Gladiali}
\address{Matematica e Fisica, Universit\`a  di Sassari  - Via Piandanna 4, 07100 Sassari - Italy.} 
\email{fgladiali@uniss.it}
\author[Pacella]{Filomena Pacella}
\address{Dipartimento di Matematica, Universit\`a di Roma
" La Sapienza " -  P.le A. Moro 2 - 00185 Roma - Italy.}
\email{pacella@mat.uniroma1.it}
\date{}
\thanks{Supported by PRIN-2009-WRJ3W7 grant}
\subjclass [2010] {35B06,35B50,35J47,35G60}
\keywords{Cooperative elliptic systems, Symmetry,
Maximum Principle,  Morse index}
\begin{abstract} In this paper we prove symmetry results for classical solutions of nonlinear cooperative elliptic systems in a ball or in annulus in $\RN$, $N \geq 2 $.
More precisely we prove that solutions having Morse index $j \leq N-1 $ are foliated Schwarz symmetric if the nonlinearity has a convex derivative and a full coupling condition is satisfied along the solution.
   \end{abstract}

\maketitle



\section{  \textbf{Introduction and statement of the results} }
\label{se:1}

We consider a semilinear elliptic system of the type
\begin{equation} \label{modprob} 
\begin{cases} - \gD U =F(|x|,U) \quad &\text{in } \gO \\
U= 0 \quad & \text{on } \de  \gO
\end{cases}
\end{equation}
where $F=(f_1 , \dots , f_m)$ is a function belonging to $C^{1, \ga }([0,+\infty )\times \Rm; \Rm)$, $\gO$ is a bounded domain in $\RN$ and $U=(u_1, \dots ,u_m)$ is a vector valued function in $\gO$,  $m, N \geq 2$.\\ 
Systems of this type arise in many applications in different fields (see e.g. \cite{Me}, \cite{Mu}).\par
In this paper we continue the study of the symmetry of solutions of \eqref{modprob} in rotationally symmetric domains, namely when $\Omega $ is a ball or an annulus, started in \cite{DamPac}.\par
We recall that if $\Omega $ is a ball and the system is cooperative then in  \cite{BS1}, \cite{deF1}, \cite{deF2}, \cite{Tr} it is proved that every positive solution $U$ (i.e. $u_i >0$ for any $i=1, \dots m $) is radial and radially decreasing if every $f_i $ is non increasing with respect to $|x|$. This result is obtained through the famous ''moving plane method'' (\cite{S}), as in the scalar case (\cite{GNN}). However this method does not allow neither to consider sign changing solutions nor nonlinearities which are not radially nonincreasing with respect to $|x|$. Moreover, as it is a continuation method, it does not provide symmetry results in the annulus.\par 
Another symmetry result for cooperative systems in a ball, obtained by the symmetrization method,  is proved in \cite{KP}.\par
In the scalar case  a different approach was introduced in \cite{P} and later extended in \cite{PW} and \cite{GPW} (see also \cite {PR}) which allows to cover cases which cannot be treated by using the moving plane method. This approach is essentially based on Morse index considerations and on some convexity properties of the nonlinearity. The symmetry obtained for the solutions of \eqref {modprob} with this method is an axially symmetry which is indeed what one expects. \par
It is then natural to try to extend the method of \cite{P} to the case of systems like \eqref{modprob}. However this extension presents several difficulties, as we explain below, and cannot be done straightforwardly. In \cite{DamPac} we obtained symmetry results in this direction for solutions of \eqref {modprob} imposing some assumptions on the nonlinearity which in particular imply that it is convex in all variables. For some kind of systems (e.g. power type odd nonlinearities) this  hypothesis does not allow to get symmetry of sign changing solutions.\par
Here we will consider other type of nonlinearities, as in \cite{GPW}, which cover this case. Moreover in \cite{DamPac}, if $m \geq 3$ the nonlinearity should be the sum of functions depending only on two variables. For the nonlinearities we consider here we do not need this assumption.\par
To present our results and to be more precise we need some preliminary definitions.

 \begin{definition}\label{foliatedSS}
Let $\gO$ be a rotationally symmetric domain in $\RN$, $N\geq 2$. We say that a continuous vector valued function $U=(u_1 , \dots ,u_m): \gO \to \Rm$ is foliated Schwarz symmetric if each component $u_i$ is foliated Schwarz symmetric with respect to the same vector $p \in \RN$. In other words there exists a vector $p \in \RN$, 
$|p|=1$, such that $U(x)$ depends only on $r=|x|$ and $\theta= \arccos \left ( \frac {x}{|x|}\cdot p  \right ) $ and $U$ is (componentwise) nonincreasing in $\theta $.
\end{definition}
\begin{remark} Let us observe that if $U$ is a solution of \eqref{modprob} and the system satisfies some coupling conditions, as required in Theorem \ref{f'convessa}, then the foliated Schwarz symmetry of $U$ implies that either $U$ is radial or it is strictly decreasing in the angular variable $\theta $. This will be deduced by the proof of Theorem \ref{f'convessa}.
\end{remark}

\begin{definition}\label{MorseIndex}  Let $U$ be a $C^2 (\gO;\Rm)$ solution of \eqref{modprob}. 
\begin{itemize}
\item [i)]
We say that $U$ is linearized stable (or that it has zero Morse index) if the quadratic form 
\begin{equation} \label{quadraticformlineariz}
\begin{split}
Q_U (\Psi ;\gO  ) &= \int _{\gO} \left [ |\nabla \Psi |^2 - J_F (|x|, U)(\Psi ,\Psi ) \right ]dx= \\
& \int _{\gO}\left [   \sum _{i=1}^m |\nabla \psi _i |^2 -\sum _{i,j=1}^m \frac {\de f_i}{\de u_j}(|x|, U(x)) \psi _i \psi _j \right ]  \, dx \geq 0
\end{split}
\end{equation}
for any $\Psi = (\psi _1, \dots , \psi _m) \in C_c^1 (\gO;\Rm)$ where $J_F (x,U)$ is the jacobian matrix of $F$ computed at $U$.
\item [ii)] $U$ has (linearized) Morse index  equal to the integer $\mu=\mu (U)\geq 1$ if $\mu $ is the maximal dimension of a subspace of  $ C_c^1 (\gO;\Rm)$ where the quadratic form is negative definite.
\item [iii)] $U$ has infinite (linearized) Morse index  if for any integer $k$ there is a $k$-dimensional subspace of  $ C_c^1 (\gO;\Rm)$ where the quadratic form is negative definite.
\end{itemize}
\end{definition}

\begin{definition} 
\begin{itemize}
\item We say that the system \eqref{modprob} is cooperative or weakly coupled in an open set $\gO' \subseteq \gO $  if 
$$  \frac {\de f _i} {\de u _j} (|x|, u_1, \dots , u_m) \geq 0 \quad \forall \; (x,u_1,\dots ,u_m ) \in \gO '  \times  \Rm 
$$
for any  $i,j=1,\dots ,m$ with  $ i \neq j$.
  \item  We say that the system \eqref{modprob} is fully  coupled along a solution $U$ in an open set $\gO' \subseteq \gO $ if  it is cooperative in $\gO ' $ and 
 in addition $\forall I,J \subset \{1, \dots ,m \}$ such that $I \neq \emptyset $, $J \neq \emptyset $, $I \cap J = \emptyset $, $I \cup J = \{1, \dots ,m \} $ there exist $i_0 \in I$, $j_0 \in J $ such that 
$$\text{meas }(\{ x \in \gO ' :   \frac {\de f _{i_0}} {\de u _{j_0}} (|x|, U(x)) > 0 \}) >0$$  
 \end{itemize}
\end{definition}

Note that the previous definition means that the linearized system at a solution $U$ is weakly or fully coupled. \par
\smallskip 

 Let $e\in S^{N-1}$ be a direction, i.e. $e \in \RN $, $|e|=1$,  and let us define the set 
 $$   \gO (e)= \{ x \in \gO : x \cdot e >0\}  $$
 \smallskip 
 
 In \cite {DamPac} we proved the following result
 
 \begin{theorem}[ \cite {DamPac} ]  \label{fconvessa}   Let $\gO$ be a ball or an annulus in $\RN $, $N \geq 2$, and let $U \in C^{3,\ga }(\ov {\gO}; \Rm)$ be a solution of \eqref{modprob} with Morse index $\gm (U) \leq N $. 
Moreover assume that:
\begin{itemize} 
\item [i) ]  The system is fully coupled along $U$ in $ \gO (e) $, for any $e\in S^{N-1}$.
 \item [ii) ] For any $i,j=1, \dots m $  \  $\frac {\de f_i} {\de u_j}(|x|,u_1, \dots , u_m)$ is nondecreasing in each variable  $u_k$,  $k=1, \dots , m $, for any $|x| \in \gO $.
\item [iii)]    if $m \geq 3 $ then,   for any $i\in  \{1, \dots ,m \}$,   $f_i(|x|,u_1,\dots , u_m)= \sum _{k \neq i} g_{ik}(|x|,u_i,u_k)$ where 
$g_{ik} \in C^{1, \ga }([0,+\infty )\times \R ^2)$.
  \end{itemize}
 Then $U$ is  foliated Schwarz symmetric and  if $U$ is not radial then it is strictly decreasing in the angular variable (see Definition \ref{foliatedSS}).
\end{theorem}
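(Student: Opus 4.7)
The natural plan is to follow the Morse-index/rotation strategy introduced by Pacella in the scalar case, suitably adapted to the system. First, for each direction $e \in S^{N-1}$, let $\sigma_e$ denote the reflection across the hyperplane $T_e = \{x \cdot e = 0\}$ and set $W^e(x) = U(x) - U(\sigma_e(x))$ on $\Omega(e)$. Since $F$ depends on $x$ only through $|x|$, each component $w_i^e$ satisfies a linearized cooperative system
\[
-\Delta w_i^e = \sum_{j=1}^m c_{ij}^e(x)\, w_j^e \quad \text{in } \Omega(e), \qquad W^e = 0 \text{ on } \partial \Omega(e),
\]
with $c_{ij}^e(x) = \int_0^1 \frac{\partial f_i}{\partial u_j}(|x|, U(\sigma_e(x)) + t W^e(x))\, dt \geq 0$ for $i \neq j$. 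This setup is the system analogue of the scalar $u - u \circ \sigma_e$ construction.

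Next, I would exploit hypothesis (ii) together with (iii) when $m \geq 3$. Since each $\partial f_i/\partial u_j$ is nondecreasing in every variable $u_k$, one has the one-sided comparison $c_{ij}^e(x) \leq \frac{\partial f_i}{\partial u_j}(|x|, U(x))$ at points of $\Omega(e)$ where $W^e \geq 0$ componentwise (and the reverse inequality where $W^e \leq 0$). The separability condition (iii) is what allows this comparison to survive on mixed sign regions when $m \geq 3$, since each $g_{ik}$ depends on only two components, and the cross-monotonicity can then be controlled pairwise rather than jointly. Combined with an integration by parts against a suitable ``positive part'' $V = ((w_1^e)^+, \dots, (w_m^e)^+)$ (respectively negative part), one obtains $Q_U(V; \Omega(e)) \leq 0$ on its support, with strict inequality unless $V$ is itself a solution of the linearized system.

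The heart of the argument is the Morse-index counting. Extending $V$ by zero to $\Omega$ gives an admissible test function in $Q_U(\cdot;\Omega)$. If for some $e$ the vector $W^e$ fails to have a definite componentwise sign on $\Omega(e)$, both $(W^e)^+$ and $(W^e)^-$ give rise to independent test directions on which $Q_U$ is negative; reflecting across $T_e$ doubles this. Applying this mechanism along a suitable family of directions on $S^{N-1}$ and using that the assignment $e \mapsto W^e$ is continuous, I would produce $N+1$ linearly independent test functions with $Q_U < 0$ under the assumption that no direction yields a definite sign — contradicting $\mu(U) \leq N$. Therefore there exists at least one direction $e_0$ with $W^{e_0} \geq 0$ (or $\leq 0$) componentwise on $\Omega(e_0)$. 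The full coupling hypothesis (i) then activates the strong maximum principle for cooperative systems, forcing the dichotomy: either $W^{e_0} \equiv 0$, or $W^{e_0}$ is componentwise strictly positive and $\partial_{e_0} U < 0$ on $T_{e_0} \cap \Omega$.

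Finally, I would run the rotation/continuity argument to upgrade this one-direction symmetry into foliated Schwarz symmetry. The set $\mathcal{G} = \{ e \in S^{N-1} : W^e \geq 0 \text{ componentwise on } \Omega(e)\}$ is closed, nonempty, and (by the step above applied to all $e$) covers $S^{N-1}$ up to the locus where $W^e \equiv 0$. A standard connectedness argument on the sphere then produces a direction $p$ such that $W^p \equiv 0$, i.e.\ $U$ is symmetric across $T_p$, and monotonicity in the polar angle $\theta = \arccos(x \cdot p / |x|)$ follows from applying the sign-definiteness to all $e$ on one hemisphere of $p^\perp$. The main obstacle in this plan is the Morse-index step: in the scalar case one works with a single test function per direction, whereas here the cross-coupled nature of the linearized system means the inequality $Q_U(V) \leq 0$ is not automatic from the PDE satisfied by $W^e$, and the convexity assumption (ii) plus separability (iii) must be used exactly at this point to make the quadratic comparison go through componentwise.
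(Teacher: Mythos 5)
This statement is imported verbatim from \cite{DamPac} and is \emph{not} proved in the present paper; the authors only prove the companion result Theorem \ref{f'convessa} (convex $\frac{\de f_i}{\de u_j}$, $\mu(U)\le N-1$, no separability). Your proposal is in the general spirit of that method, and you correctly identify the reflection difference $W^e$, its cooperative linear system, the use of hypothesis ii) to compare $c^e_{ij}$ with $\frac{\de f_i}{\de u_j}(|x|,U)$ on the region where $W^e\ge 0$, and the Borsuk--Ulam/Morse index flavour of the argument. However, there are several substantive gaps. First, you never address the central difficulty the paper flags repeatedly: the Jacobian $J_F(|x|,U)$ is not symmetric, so the Morse index (defined via the quadratic form $Q_U$, i.e.\ via the \emph{symmetrized} operator $\tilde L_U$) is a priori unrelated to the sign of the principal eigenvalue $\tilde\gl_1$ of the nonsymmetric operator $L_U$. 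The whole machinery of Proposition \ref{varformautov} and Proposition \ref{principaleigenvalue} (in particular $\tilde\gl_1 \ge \gl_1^{\text{(s)}}$) exists precisely to bridge this gap, and your argument silently conflates the two. Second, your topological step is hand-waved: ``applying this mechanism along a suitable family of directions and using continuity I would produce $N+1$ linearly independent test functions'' is not an argument. The paper's analogue (Lemma \ref{lemma4}) constructs an explicit continuous \emph{odd} map $e\mapsto \Phi^{e,s}$ into $\huno$ and applies Borsuk--Ulam to $h(e)=\bigl((\Phi^{e,s},\Phi_1)_{\ldue},\dots\bigr)$, and that construction only reaches $\mu(U)\le N-1$. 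Reaching $\mu(U)\le N$ — the whole point of Theorem \ref{fconvessa} — requires replacing the eigenfunction $\phi^{e,s}$ with the positive/negative parts of $W^e$ and exploiting the extra degree of freedom between $(W^e)^+$ and $(W^e)^-$; you mention these parts but never say how the odd map is set up nor why it gains the extra dimension.

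Third, the coefficient comparison on the support of $(W^e)^+$ is shakier than you acknowledge. For $i\neq j$, the product $(w^e_i)^+(w^e_j)^+$ is supported where both components are positive, and there the separability iii) plus monotonicity ii) do close the estimate since $\frac{\de f_i}{\de u_j}=\frac{\de g_{ij}}{\de u_j}(|x|,u_i,u_j)$ depends only on $u_i,u_j$. But for the diagonal term $i=j$ you need $c^e_{ii}\le \frac{\de f_i}{\de u_i}(|x|,U)$ on $\{w^e_i>0\}$, and $\frac{\de f_i}{\de u_i}=\sum_{k\neq i}\frac{\de g_{ik}}{\de u_i}(|x|,u_i,u_k)$ still involves every $u_k$; monotonicity in $u_k$ goes the wrong way wherever $w^e_k<0$. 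This is exactly the place where the argument for Theorem \ref{fconvessa} is delicate, and your sketch does not explain how it is resolved. Finally, the concluding ``standard connectedness argument on the sphere'' is not what is done: after one direction $e_0$ with a sign is found, the paper invokes the rotating plane method (Berestycki--Nirenberg adaptation, used in the proof of Theorem \ref{f'convessa}) together with Lemma \ref{lemma3} to reach a symmetry hyperplane with $\tilde\gl_1(L_U,\gO(e'))\ge 0$, and then deduces foliated Schwarz symmetry via Lemma \ref{lemma2}/\ref{lemma3}. A bare connectedness argument on $S^{N-1}$ does not by itself produce the symmetry direction or the strict angular monotonicity.
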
 

The monotonicity hypothesis ii) implies that each $f_i$ is convex with respect to each variable $ u_j $, $i,j= 1, \dots ,m$. \par 
Note that the regularity of the solution $U$ in Theorem \ref{fconvessa}, as in the next one, is a consequence of the regularity of the nonlinearity $F$ which is assumed to be of class $C^{1,\alpha}$. Moreover it implies that the derivatives of $U$ are classical solutions of the linearized system which is used in the proof.\par
Though the results of  Theorem \ref{fconvessa} also applies to sign changing solutions of \eqref{modprob} there are cases when the nonlinearity $F$, if considered on the whole $\R^m$ is not convex in the $u_k $ variables. This is, for example, the case of
$$F(u_1, u_2) =( f_1(u_1, u_2), f_2(u_1, u_2)  )  = ( | u_2 | ^{p-2} u_2 , | u_1 | ^{q-2} u_1) \quad, \quad p,q >1
$$

As announced we consider here other nonlinearities which cover this case and we also do not assume iii) of Theorem \ref{fconvessa} when $m \geq 3$. More precisely our symmetry results are the following.

\begin{theorem} \label{f'convessa} Let $\gO$ be a ball or an annulus in $\RN $, $N \geq 2$, and let $U \in C^{3,\ga }(\ov {\gO}; \Rm)$ be a solution of \eqref{modprob} with Morse index $\gm (U) \leq N-1 $. 
Moreover assume that:
\begin{itemize} 
\item [i) ]  The system is fully coupled along $U$ in $\gO $  
\item [ii) ] For any $i,j=1, \dots m $  the function   $\frac {\de f_i} {\de u_j}(|x|,S)$ is convex in $S=(s_1, \dots , s_m)$:
$$\frac {\de f_i} {\de u_j}(|x|,t  S'+(1-t) S'') \leq  t \frac {\de f_i} {\de u_j}(|x|, S')+(1-t)\frac {\de f_i} {\de u_j}(|x|, S'') $$ 
for any $t\in[0,1]$,  $S'$, $ S'' \in \R^m$  and $x \in \Omega $.
  \end{itemize}
 Then $U$ is  foliated Schwarz symmetric and  if $U$ is not radial then it is strictly decreasing in the angular variable (see Definition \ref{foliatedSS}).
\end{theorem}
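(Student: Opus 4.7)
I will adapt the Morse-index / reflection method of \cite{P,PW,GPW,DamPac} to the present system. For each $e\in S^{N-1}$ set $\gs_e(x)=x-2(x\cdot e)\,e$, $U_e(x)=U(\gs_e(x))$ and $W_e=U-U_e$ on $\ov{\gO(e)}$. By the fundamental theorem of calculus, componentwise, $W_e$ solves the linear cooperative system
\begin{equation*}
-\gD W_e = C_e(x)\,W_e\quad\text{in }\gO(e),\qquad W_e=0\quad\text{on }\de\gO(e),
\end{equation*}
where $c^e_{ij}(x)=\int_0^1 \frac{\de f_i}{\de u_j}\bigl(|x|,tU(x)+(1-t)U_e(x)\bigr)\,dt$. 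Hypothesis ii) immediately yields the entrywise inequality
\begin{equation*}
c^e_{ij}(x)\le \tfrac12\Bigl[\tfrac{\de f_i}{\de u_j}(|x|,U(x))+\tfrac{\de f_i}{\de u_j}(|x|,U_e(x))\Bigr],\quad 1\le i,j\le m.
\end{equation*}

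The central step is a quadratic form comparison. For $\gF\in H_0^1(\gO(e);\Rm)$ with non-negative components, let $\tilde\gF$ denote its antisymmetric extension to $\gO$ under $\gs_e$. Using the $\gs_e$-invariance of $|x|$ and a change of variables in $\gO\setminus\ov{\gO(e)}$, one computes
\begin{equation*}
Q_U(\tilde\gF;\gO)=\int_{\gO(e)}\!\Bigl[2|\nabla\gF|^2-\bigl(J_F(|x|,U)+J_F(|x|,U_e)\bigr)(\gF,\gF)\Bigr]\,dx.
\end{equation*}
Since $\gF\ge 0$ componentwise, every pairwise product of components is non-negative and the entrywise bound on $C_e$ above gives
\begin{equation*}
Q_U(\tilde\gF;\gO)\le 2\int_{\gO(e)}\!\Bigl[|\nabla\gF|^2-C_e(x)(\gF,\gF)\Bigr]\,dx=:2\,\tilde Q_e(\gF).
\end{equation*}
The matrix $C_e$ inherits the cooperative and full-coupling structure of $J_F(|x|,U(\cdot))$ on $\gO(e)$ by continuity (in particular $U=U_e$ on $T_e\cap\gO$), so Perron--Frobenius theory for cooperative systems provides a simple principal eigenvalue $\gl_1(e)$ of $-\gD-C_e$ on $H_0^1(\gO(e);\Rm)$ with a strictly positive eigenfunction $\gF_e$. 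Evaluating the inequality at $\gF_e$ gives $Q_U(\tilde\gF_e;\gO)\le 2\gl_1(e)\|\gF_e\|_{L^2}^2$.

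The strong maximum principle applied to $W_e$ under full coupling then yields a trichotomy for each $e$: (a) $\gl_1(e)>0$ forces $W_e\equiv 0$, so $U$ is $\gs_e$-symmetric; (b) $\gl_1(e)=0$ and $W_e$ is either identically zero or proportional to $\gF_e$ (hence of strict componentwise sign in $\gO(e)$); (c) $\gl_1(e)<0$ and $\tilde\gF_e\in H_0^1(\gO;\Rm)$ is a test function with $Q_U(\tilde\gF_e;\gO)<0$. To conclude, I assume by contradiction that $U$ is not foliated Schwarz symmetric and apply a topological / counting argument on $S^{N-1}$ in the spirit of \cite{P,PW,GPW,DamPac} to extract $N$ directions $e_1,\dots,e_N$ at which (c) occurs and for which the antisymmetric extensions $\tilde\gF_{e_1},\dots,\tilde\gF_{e_N}$ generate an $N$-dimensional subspace of $H_0^1(\gO;\Rm)$ on which $Q_U$ is negative definite, violating $\gm(U)\le N-1$. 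Hence there exists an axis $p\in S^{N-1}$ such that $U$ is invariant under all rotations about $p$; the strict monotonicity in the polar angle $\gth=\arccos((x/|x|)\cdot p)$ when $U$ is not radial then follows from the strong maximum principle applied, under full coupling, to an angular derivative of $U$, which solves the linearized system and has a definite sign by (b).

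\textbf{Main obstacle.} The most delicate point is the last extraction step: the functions $\tilde\gF_{e_i}$ are eigenfunctions of \emph{different} operators on \emph{different} caps, so the ordinary spectral theorem does not directly certify negative definiteness of $Q_U$ on their span. The argument must proceed via an isotypic decomposition of $H_0^1(\gO;\Rm)$ under the finite group generated by $\gs_{e_1},\dots,\gs_{e_N}$, combined with the componentwise pointwise inequality in each cap, and it is precisely here that the improvement of the Morse-index bound to $\gm(U)\le N-1$ (as opposed to $\le N$ in Theorem \ref{fconvessa}) is consumed; the weaker convexity of $\de f_i/\de u_j$ in $S$, instead of the stronger monotonicity of ii) in Theorem \ref{fconvessa}, is precisely the cost.
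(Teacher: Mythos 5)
Your preliminary set-up is sound and coincides with Lemma \ref{lemma1} of the paper: the difference $W_e=U-U^{\gs_e}$ solves a cooperative, fully coupled linear system with the mean-value matrix, hypothesis ii) gives the entrywise comparison with the averaged Jacobian $\frac 12\bigl(J_F(|x|,U)+J_F(|x|,U^{\gs_e})\bigr)$, and $Q_U$ evaluated on odd (antisymmetric) extensions reduces to a cap integral. But the core of the argument is missing, and you have identified it yourself: the plan to extract, when $U$ is not symmetric, $N$ directions $e_1,\dots,e_N$ at which the cap eigenvalue is negative and whose eigenfunctions' odd extensions span an $N$-dimensional subspace on which $Q_U$ is negative definite does not work. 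The $\tilde\gF_{e_i}$ are eigenfunctions of different operators on different, overlapping caps; the cross terms $Q_U(\tilde\gF_{e_i}+\tilde\gF_{e_j};\gO)$ are uncontrolled, there is no $L^2$ orthogonality, and no isotypic decomposition is available (the group generated by $N$ generic reflections is infinite and $U$ is not invariant under it). This is not the route of \cite{P}, \cite{PW}, \cite{GPW} or \cite{DamPac} either. The paper uses the Morse index bound in the opposite way and never needs such a span: if $j=\gm(U)\le N-1$ and $\Phi_1,\dots,\Phi_j$ are the negative symmetric eigenfunctions of $L_U$, the map $e\mapsto\Phi^{e,s}$ (odd extension of the first eigenfunction of the \emph{symmetrized} cap operator $L^{e,s}$ with matrix $B^{e,s}$ of \eqref{DefCoeffSimm}) is odd and continuous, so by Borsuk--Ulam there is one direction $e$ with $\Phi^{e,s}\perp\Phi_1,\dots,\Phi_j$ in $\ldue$, whence $0\le Q_U(\Phi^{e,s};\gO)=2\gl_1^{\text{s}}(L^{e,s},\gO(e))$ (Lemma \ref{lemma4}); the convexity then transfers nonnegativity to $\gl_1^{\text{s}}(-\gD+B^{e},\gO(e))$ via the \emph{symmetric} eigenvalue comparison of Proposition \ref{varformautov} ix) — note that your principal eigenvalue $\gl_1(e)$ of the non-symmetric operator $-\gD-C_e$ admits no Rayleigh-quotient characterization, so your inequality $Q_U(\tilde\gF_e;\gO)\le 2\gl_1(e)\|\gF_e\|^2_{\ldue}$ cannot by itself replace this step.

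There is a second gap after the trichotomy. In the delicate case ($\gl_1^{\text{s}}(-\gD+B^e,\gO(e))=0$ and $W^e\not\equiv 0$), $W^e$ has a strict sign, so $U$ is \emph{not} symmetric about $T(e)$; neither Lemma \ref{lemma2} (which requires the sign property for \emph{every} direction) nor Lemma \ref{lemma3} (which requires an actual symmetry hyperplane with nonnegative principal eigenvalue on the cap) applies yet, and your sketch offers no bridge. The paper closes this case with the rotating-plane argument: rotating $e_\theta$ from $e$, the strong maximum principle (Theorem \ref{SMP}) and the small-measure maximum principle show the strict inequality $U>U^{\gs_{e_\theta}}$ persists up to a critical angle $\theta_0<\pi$ at which $U\equiv U^{\gs_{e_{\theta_0}}}$, and passing to the limit in the coefficients gives $\tilde\gl_1(L_U,\gO(e_{\theta_0}))=0$, so that Lemma \ref{lemma3} finally yields the foliated Schwarz symmetry. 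Your closing sentence, deducing strict angular monotonicity because the angular derivative ``has a definite sign by (b)'', is circular: a sign for $U_\theta$ is essentially the conclusion being proved, and in the paper it is obtained only \emph{after} a symmetry hyperplane is found, from the full coupling of the linearized system and the principal-eigenvalue argument (see the proof of Theorem \ref{corollario1}). As it stands, the proposal reproduces the preparatory lemma but not the two steps (Borsuk--Ulam orthogonality exploiting $\gm(U)\le N-1$, and the rotating-plane step) that actually prove the theorem.
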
 

 \begin{remark} \label{ConfrontoAltroLavoro} As observed before the assumption ii) of Theorem \ref{f'convessa} allows to get the symmetry of solutions in cases not covered by Theorem \ref{fconvessa} and also to remove the assumption iii) for $m \geq 3$.\\
 On the contrary the assumption on the Morse index is more restrictive since we require $\gm (U) \leq N-1 $. Note that in the scalar case, in \cite {PW} (see also \cite{GPW}, \cite {PR}) we could get the symmetry result up to Morse index less than or equal to $N$. For systems some serious difficulty arises and we are not able to consider the case $\gm (U) = N $.
  \end{remark}

   \begin{theorem} \label{corollario1}
 Under the assumptions of Theorem \ref{f'convessa} assume that $U$ is a \emph{nonradial} solution of \eqref{modprob} and 
  either 
  \begin{itemize}  
   \item[a) ]  \  \  $U$ has Morse index one \par 
  \ \ \   \ \ \ \ or \par 
   \item[b) ]  \  \   there exist $ i_0, j_0 \in \{1, \dots ,m  \} $ such that the function $\frac {\de f_{i_0}} {\de u_{j_0}}(|x|,S)$ satisfies the following strict convexity assumption:  
  \be  \label{strictconvexity} \frac {\de f_{i_0}} {\de u_{j_0}}(|x|,t  S'+(1-t) S'') <  t \frac {\de f_{i_0}} {\de u_{j_0}}(|x|, S')+(1-t)\frac {\de f_{i_0}} {\de u_{j_0}}(|x|, S'') 
  \ee 
for any $t \in (0,1) $, whenever    $x \in \Omega $ and  $S'$, $ S'' \in \R^m$  satisfy $s'_k \neq s'' _k $ for any $k \in \{1, \dots ,m \}$. 
 \end{itemize}
  Then necessarily
\be \label{superfullycoupling1}
 \sum_{j=1}^m  \frac {\de f _i}{\de u _j}(r, U(r, \theta)) \frac {\de u_j } {\de \theta  }(r, \theta )=
 \sum_{j=1}^m  \frac {\de f _j}{\de u _i}(r, U(r, \theta))   \frac {\de u_j } {\de \theta  }(r, \theta )
\ee
 for any    $i=1,\dots ,m $, whith $(r, \theta)$ as in Definition \ref{foliatedSS}. \\ 
   In particular  if $m=2$ then \eqref{superfullycoupling1} implies that 
 \be \label{superfullycoupling2} 
 \frac {\de f _1}{\de u _2}(|x|, U(x))=  \frac {\de f _2}{\de u _1}(|x|, U(x)) \; , \quad \forall \, x \in \gO \;.
 \ee
 \end{theorem}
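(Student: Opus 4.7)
The plan is to combine the foliated Schwarz symmetry of $U$ (from Theorem \ref{f'convessa}) with a rigidity property of rotational derivatives. For each skew-symmetric $\tau \in \mathfrak{so}(N)$, the vector field $D_\tau U(x) := (\tau x)\cdot\nabla U(x)$ belongs to $H_0^1(\Omega;\Rm)$ (since $\tau x$ is tangential to $\partial\Omega$) and, by rotation invariance of $-\Delta$ and of $F(|x|,\cdot)$ in $x$, satisfies $-\Delta D_\tau U = J_F(|x|, U)\,D_\tau U$; integrating by parts against $D_\tau U$ then yields $Q_U(D_\tau U;\Omega)=0$. Moreover, the polarisation of $Q_U$ vanishes on every pair $(D_\tau U, D_{\tau'}U)$, because it equals $\int (D_{\tau'}U)^T J_F^a(D_\tau U)\,dx$ with $J_F^a := (J_F - J_F^T)/2$, which is antisymmetric in $(\tau,\tau')$ while being symmetric as a polar form, hence zero. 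From the foliated Schwarz structure about $p$ and $\tau_\eta := \eta\otimes p - p\otimes\eta$ for $\eta\in p^\perp$, a direct computation gives
\[ D_{\tau_\eta}U(x) = \frac{x\cdot\eta}{|x|\sin\theta}\,\frac{\partial U}{\partial\theta}(x),\qquad \theta=\arccos\!\left(\tfrac{x\cdot p}{|x|}\right), \]
so the subspace $V := \{D_{\tau_\eta}U : \eta\in p^\perp\} \subset H_0^1(\Omega;\Rm)$ is $(N-1)$-dimensional (using nonradiality to know $\partial_\theta U\not\equiv 0$) and is totally $Q_U$-isotropic.

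In case (a), with $\mu(U)=1$: applying Sylvester's inertia bound to the self-adjoint operator $\tilde L := -\Delta - J_F^s(|x|, U)$ associated to $Q_U$ (where $J_F^s$ is the symmetric part of $J_F$) yields, for any totally $Q_U$-isotropic subspace $V$, that $\dim(V\cap\ker\tilde L) \geq \dim V - \mu(U) = N-2$. Hence there is a subspace $\Sigma\subseteq p^\perp$ of dimension $\geq N-2$ such that, for every $\eta\in\Sigma$,
\[ 0 = \tilde L D_{\tau_\eta}U = J_F^a D_{\tau_\eta}U = \frac{x\cdot\eta}{|x|\sin\theta}\,J_F^a\,\partial_\theta U \quad\text{in }\Omega. \]
Varying $\eta$ in $\Sigma$, the common exceptional set $\Sigma^\perp\cap\Omega$ has measure zero (for $N\geq 3$), and continuity of $J_F^a(|x|, U)\,\partial_\theta U$ forces it to vanish identically in $\Omega$, which is exactly (\ref{superfullycoupling1}). (For $N=2$ the Sylvester bound is vacuous and one needs a supplementary argument using the $\sigma_p$-antisymmetry of $D_\tau U$ together with the $\sigma_p$-symmetry of the unique negative eigenvector of $\tilde L$.)

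In case (b), the bound $\mu(U)\leq N-1$ is too weak to reach (\ref{superfullycoupling1}) from the total isotropy of $V$ alone. Instead, I would revisit the proof of Theorem \ref{f'convessa} to locate the convexity step $\int_0^1 (\partial f_i/\partial u_j)(|x|, tU+(1-t)U^e)\,dt \leq \tfrac12\bigl[(\partial f_i/\partial u_j)(|x|, U) + (\partial f_i/\partial u_j)(|x|, U^e)\bigr]$ underlying the foliated Schwarz conclusion. Strict convexity of $\partial f_{i_0}/\partial u_{j_0}$ forces the $(i_0, j_0)$ entry of this inequality to be strict at every point where $u_k(x)\neq u_k(\sigma_e x)$ for every $k$; by the strict monotonicity of $U$ in $\theta$ given by Theorem \ref{f'convessa}, this is generic. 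If (\ref{superfullycoupling1}) were to fail, this strict loss would, in the infinitesimal limit $e\to\eta$ with $\eta\perp p$, enlarge the totally $Q_U$-isotropic subspace beyond the Sylvester bound $\mu(U)+\dim\ker\tilde L$, contradicting $\mu(U)\leq N-1$. Pinpointing this strict-convexity equality case inside the proof of Theorem \ref{f'convessa} is the principal obstacle of the whole argument.

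Finally, for $m=2$, identity (\ref{superfullycoupling1}) reads componentwise $\bigl(\partial f_1/\partial u_2 - \partial f_2/\partial u_1\bigr)\,\partial_\theta u_k \equiv 0$ for $k=1,2$. Because $U$ is strictly decreasing in $\theta$, no $\partial_\theta u_k$ can vanish on any nonempty open subset of $\Omega$, so if $\partial f_1/\partial u_2 - \partial f_2/\partial u_1$ were nonzero at some point, continuity would produce an open set on which $\partial_\theta u_k\equiv 0$, a contradiction. Hence (\ref{superfullycoupling2}) holds.
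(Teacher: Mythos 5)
Your treatment of case (b) is a genuine gap, and you say so yourself (``the principal obstacle of the whole argument''). Moreover the mechanism you sketch cannot work as stated: the inertia bound $\dim V\le \mu(U)+\dim\bigl(V\cap\ker \tilde L\bigr)$ holds automatically for \emph{every} totally $Q_U$-isotropic subspace, and nothing in the hypotheses controls $\dim\ker\tilde L$, so ``enlarging the isotropic subspace'' can never contradict $\mu(U)\le N-1$. In the paper, strict convexity enters at a precise and quite different place: for the Borsuk--Ulam direction $e$ of Lemma \ref{lemma4} one knows $\gl_1^{\text{s}}(L^{e,s},\gO(e))\ge 0$; if $W^{e}=U-U^{\gs_e}\not\equiv 0$ (in the residual case $\gl_1^{\text{s}}(-\gD+B^{e},\gO(e))=0$), then $W^{e}$ is a first eigenfunction of the fully coupled system \eqref{EquazDifferenza}, hence of one sign, and the strict form of \eqref{CfrCoefficienti} in Lemma \ref{lemma1} gives $0=Q^{e}(W^{e};\gO(e))>Q^{e,s}(W^{e};\gO(e))\ge 0$, a contradiction. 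So under (b) one gets a symmetry hyperplane $T(e)$ with the \emph{extra} information $\gl_1^{\text{s}}(L_U,\gO(e))\ge 0$ (the rotating-plane alternative inside the proof of Theorem \ref{f'convessa} only gives nonnegativity of the principal eigenvalue, which is weaker). From there the paper concludes, for both (a) and (b), via $U_\theta$: it solves the linearized system in $\gO(e)$ and vanishes on $\de\gO(e)$; nonradiality forces $\tilde{\gl}_1(L_U,\gO(e))=\gl_1^{\text{s}}(L_U,\gO(e))=0$; then $U_\theta$ is the positive principal eigenfunction and the equality case in Proposition \ref{principaleigenvalue} iv) yields $J_F U_\theta=\frac12 (J_F+J_F^{t})U_\theta$, i.e. \eqref{superfullycoupling1}, positivity of $U_\theta$ giving \eqref{superfullycoupling2} when $m=2$. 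Your proposal offers no substitute for this chain, so half of the theorem is unproved.

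Case (a) is different: your argument via the rotational derivatives $D_{\tau_\eta}U$, the vanishing of the polarized form on their span, and the bound $\dim(V\cap\ker\tilde L)\ge \dim V-\mu(U)$ is a genuinely different route from the paper's (which instead notes that for a Morse index one solution at least one of $\gl_1^{\text{s}}(L_U,\gO(\pm e))$ is nonnegative and then works on the half-domain), and for $N\ge 3$ it checks out. But for $N=2$ you again only gesture at a ``supplementary argument''; note that the reflection idea you hint at actually closes case (a) for every $N\ge 2$ and makes the isotropy/inertia machinery unnecessary: for $\eta\perp p$ the reflection $\gs_\eta$ fixes $U$, so by simplicity and sign-constancy of the first symmetric eigenfunction (full coupling, Proposition \ref{varformautov} viii)) that eigenfunction is even under $\gs_\eta$, while $D_{\tau_\eta}U$ is odd, hence $L^2$-orthogonal to it; with $\mu(U)=1$ and $Q_U(D_{\tau_\eta}U)=0$ this forces $D_{\tau_\eta}U\in\ker\tilde L$, i.e. $J_F^{a}U_\theta=0$ off the hyperplane $\{x\cdot\eta=0\}$ and then everywhere by continuity. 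Finally, your $m=2$ step leans on componentwise strict angular monotonicity from Theorem \ref{f'convessa}, which is legitimate but less direct than the paper's use of the positivity of $U_\theta$ as principal eigenfunction.
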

Note that \eqref{superfullycoupling1} and \eqref{superfullycoupling2} were also deduced in \cite{DamPac} under the assumptions of Theorem \ref{fconvessa} but only for Morse index one solutions.

\begin{remark}[Radial symmetry of stable solutions]    
  The symmetry result of Theorem \ref {f'convessa} holds in particular for stable solutions of \eqref{modprob}.\par
  However in this case it is easy to  get that the solution is radial without any assumption on the nonlinearity (see \cite {DamPac}, Theorem 1.2).
   \end{remark}
The general strategy to get symmetry results for systems, using the ideas of \cite{P} and \cite{PW} for the scalar case, is described in \cite{DamPac}.
Let us just point out here that one of the main difficulties encountered is that the linearized operator $L_U$ (see Section 3 for the definition) is not in general selfadjoint, due to the fact that the Jacobian matrix $J_F (|x|,U) $ is not symmetric. To bypass this difficulty, as in \cite {DamPac} we associate to $J_F (|x|,U) $ its symmetric part $\frac 12 \left (J_F (|x|,U) + J_F^t (|x|,U) \right ) $ where $J_F^t $ is the transpose of the matrix $J_F $. To this symmetric part is associated a selfadjoint operator $\tilde {L}_U $ whose spectrum can be variationally characterized. The crucial, simple remark in \cite{DamPac} is that the quadratic form associated to the linearized operator $L_U$  is the same as the quadratic form associated to $\tilde {L}_U $.\\
Obviously if the jacobian matrix $J_F (|x|,U) $  is symmetric  the two linear operators coincide. This happens, for example, when the system is  of  
\emph{gradient type}, i.e. when $F = \nabla  g $ for some scalar function $g$ (see \cite {deF2}), since, in this case, the linearized operator corresponds to the second derivative of a suitable associated functional.\par
However this is not the case for many interesting systems, like e.g. the so called  \emph{hamiltonian systems} (see \cite {deF2}).\par
We refer to \cite {DamPac} for further comments on this issue and on the analysis of the Morse index of solutions of \eqref{modprob}.\par
The outline of the paper is the following. In Section 2 we recall some  definitions and results about linear systems. In Section 3 we prove some preliminary results for  solutions of  \eqref{modprob} and prove  Theorem \ref {f'convessa} and Theorem \ref{corollario1}. Finally in Section 4 we present a few examples.

\section{  \textbf{Preliminaries on linear systems} }
\label{se:2}
In this section we recall several known results, almost all are proved  in the paper \cite{DamPac}. \\
Let $\gO$ be any smooth bounded domain in $\RN$, $N \geq 2$, and $D$  a $m \times m $ matrix with bounded entries:
\begin{equation}\label{ipotesiD} D= \left (  d_{ij} \right ) _{i,j=1}^m \; , \; d_{ij} \in L^{\infty} (\gO)
\end{equation}
Let us consider the linear elliptic  system
\begin{equation}  \label{linearsystem} 
\begin{cases} - \gD U + D(x) U = F \quad &\text{in } \gO \\
U= 0 \quad & \text{on } \de  \gO
\end{cases}
\end{equation}
i.e. 
$$
\begin{cases}
 - \gD u_1 +  d_{11} u_1+  \dots   +d_{1m} u_m =f_1  & \text{ in } \gO  \\
  \dots  \dots & \dots \\ 
 - \gD u_m +  d_{m1} u_1+  \dots   +d_{mm} u_m =f_m  & \text{ in } \gO \\
  u_1=  \dots  = u_m   =0  & \text {  on } \de \gO 
\end{cases}
$$
 where $F=(f_1, \dots , f_m) \in (L^2 (\gO))^m $,  $U=(u_1, \dots , u_m) $. \\
 This kind of linear system  appears in the linearization of the semilinear  elliptic system  \eqref{modprob}.
 \begin{definition}\label{sistemilineariaccoppiati} 
 The system \eqref{linearsystem} is said to be
 \begin{itemize}
 \item  \emph{cooperative} or \emph{weakly coupled} in $ \gO $ if 
 \begin{equation} \label{weaklycoupled} d_{ij}  \leq 0 \, \text{a.e. in }\gO , \quad \text{whenever } i \neq j
 \end{equation}
 \item \emph{fully coupled} in $ \gO $ if it is weakly coupled in $\gO $ and the following condition holds:
 \begin{equation}\label{fullycoupled}
 \begin{split}
  \forall \, I,J \subset \{ 1, \dots , m \}\, ,\, & I,J \neq \emptyset \, , \, I \cap J = \emptyset \, , \, I \cup J = \{ 1, \dots , m \} \, \\
  &  \exists i_0 \in I \, , \, j_0 \in J \, : \text{meas } (\{ x \in \gO  : d_{i_0j_0} <0 \}) >0
  \end{split}
 \end{equation}
 \end{itemize}
 \end{definition}
It is well known that either condition \eqref{weaklycoupled} or conditions \eqref{weaklycoupled} and \eqref{fullycoupled} together are needed in the proofs of maximum principles for systems (see \cite{deF1}, \cite{deFM}, \cite{Si} and the references therein).
In particular if both are fulfilled the strong maximum principle holds as it is shown in the next  theorem.\par
 \textbf{Notation remark}:  here and in the sequel inequalities involving vectors should be understood  to hold componentwise, e.g. if  $\Psi = (\psi _1 , \dots , \psi _m) $,   $\Psi $ nonnegative  means that $\psi _j \geq 0 $ for any index $j=1, \dots , m$. 
 
\begin{theorem}\label{SMP} (Strong Maximum Principle and Hopf's Lemma).  Suppose that \eqref{ipotesiD},   \eqref{weaklycoupled} and \eqref{fullycoupled} hold  and $U=(u_1, \dots , u_m) \in C^1 (\ov {\gO};\Rm)$  is a weak solution of the inequality 
$$- \gD U + D(x) U \geq 0 \text{ in } \gO
$$ 
  i.e. 
\begin{equation} 
 \int _{\gO}  \nabla U \cdot  \nabla \Psi   + D(x) (U ,\Psi )= 
 \int _{\gO}\left [   \sum _{i=1}^m  \nabla u_i \cdot  \nabla \psi _i  +\sum _{i,j=1}^m d_{ij}(x) u_i  \psi _j \right ]  \, dx \geq 0
\end{equation}
for any nonnegative $\Psi = (\psi _1 , \dots , \psi _m) \in C_c^1 (\gO  ; \Rm)$.  \\
If $U \geq  0 $ in $\gO $, then either $U \equiv 0 $ in $\gO $ or $U>0 $ in $\gO $. In the latter case if $P \in \partial \gO $ and $U(P)=0$ then $\frac {\de U }{\de \gn }(P) < 0$, where $\gn $ is the unit exterior normal vector at $P$.
\end{theorem}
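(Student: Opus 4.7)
My strategy is to reduce the systems statement to the scalar strong maximum principle and Hopf lemma applied to each component separately, and then use the full coupling condition \eqref{fullycoupled} to rule out the intermediate case in which some components vanish identically while others are strictly positive.

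First I would fix an index $i \in \{1,\dots,m\}$ and, testing the weak inequality against $\Psi=(0,\dots,\psi_i,\dots,0)$ with $\psi_i \in C_c^1(\gO)$, $\psi_i \geq 0$, rewrite the $i$-th component as the scalar weak inequality
\[
-\gD u_i + d_{ii}(x)\, u_i \;\geq\; -\sum_{j\neq i} d_{ij}(x)\, u_j \qquad \text{in } \gO.
\]
By \eqref{weaklycoupled} we have $d_{ij}\leq 0$ for $j\neq i$, and since $U\geq 0$ the right-hand side is nonnegative. Hence $u_i$ is a nonnegative weak supersolution of a scalar uniformly elliptic equation with bounded zero-order coefficient $d_{ii}\in L^\infty(\gO)$, and the classical scalar strong maximum principle gives the dichotomy $u_i\equiv 0$ in $\gO$ or $u_i>0$ in $\gO$ for each $i$.

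Next I would partition the indices into $I=\{i:u_i\equiv 0\}$ and $J=\{j:u_j>0\}$ and argue by contradiction, assuming both sets are nonempty. Since $\{I,J\}$ is then a nontrivial partition of $\{1,\dots,m\}$, condition \eqref{fullycoupled} supplies $i_0\in I$ and $j_0\in J$ with $\text{meas}(\{x\in\gO:d_{i_0 j_0}(x)<0\})>0$. Plugging $u_{i_0}\equiv 0$ into the $i_0$-th weak inequality yields
\[
\sum_{j\neq i_0} d_{i_0 j}(x)\, u_j(x)\;\geq\; 0 \qquad\text{a.e. in }\gO.
\]
Each summand on the left is $\leq 0$ by cooperativity and $u_j\geq 0$, while the $j_0$-th summand $d_{i_0 j_0}\, u_{j_0}$ is \emph{strictly} negative on a set of positive measure, since $u_{j_0}>0$ throughout $\gO$ and $d_{i_0 j_0}<0$ on a positive-measure set. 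This contradicts the displayed inequality, so one of $I,J$ must be empty, giving $U\equiv 0$ or $U>0$ in $\gO$.

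Finally, for the Hopf assertion, suppose $U>0$ in $\gO$ and $U(P)=0$ at some $P\in\de\gO$. Each component then satisfies $u_i>0$ in $\gO$, $u_i(P)=0$, and the scalar inequality $-\gD u_i + d_{ii}u_i \geq 0$ (again via $-\sum_{j\neq i} d_{ij}u_j\geq 0$), so the classical scalar Hopf lemma applies componentwise to give $\de u_i/\de\gn(P)<0$, i.e.\ $\de U/\de\gn(P)<0$. The only delicate point in the whole argument is the propagation step via \eqref{fullycoupled}: one must match the strict positivity of $u_{j_0}$ (obtained from the scalar SMP on each component) against the positive-measure condition on $\{d_{i_0 j_0}<0\}$ to extract a genuine strict inequality inside the integrated identity; everything else is a direct reduction to the scalar theory.
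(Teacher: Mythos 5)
Your argument is correct, and it follows what is essentially the standard proof of this recalled result: the paper itself does not prove Theorem \ref{SMP} but refers to \cite{deF1}, \cite{deFM}, \cite{Si} and \cite{DamPac}, where the argument has exactly the two-step structure you use -- componentwise reduction to the scalar strong maximum principle via cooperativity, then exclusion of a nontrivial splitting $I=\{i:\,u_i\equiv 0\}$, $J=\{j:\,u_j>0\}$ by means of the full coupling condition \eqref{fullycoupled}. Two small points would tighten your write-up. First, in the scalar reduction the diagonal coefficient $d_{ii}$ has no sign, so to invoke the classical strong maximum principle and Hopf lemma for nonnegative supersolutions you should pass to
\begin{equation*}
-\gD u_i + d_{ii}^{+}\,u_i \;\geq\; d_{ii}^{-}\,u_i \;-\;\sum_{j\neq i} d_{ij}\,u_j \;\geq\; 0 ,
\end{equation*}
which has a nonnegative bounded zero-order coefficient; since $u_i\in C^1(\ov{\gO})$ and the inequality only holds weakly, you should appeal to the weak-supersolution versions (strong maximum principle via the weak Harnack inequality, and Hopf via a barrier plus the weak comparison principle in a small annulus, using the interior sphere condition of the smooth domain). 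Second, the weak formulation in the statement pairs $u_i$ with $\psi_j$ through $d_{ij}$, so testing with a single nonzero component $\psi_{i_0}$ produces the zero-order term $\sum_{j} d_{j i_0} u_j$ rather than $\sum_{j} d_{i_0 j} u_j$; this is harmless because \eqref{fullycoupled} is assumed for \emph{every} ordered partition, so applying it to $(J,I)$ instead of $(I,J)$ still yields an entry that is strictly negative on a set of positive measure against a component $u_{j_0}>0$, which is all your contradiction step needs. With these remarks the dichotomy and the componentwise Hopf conclusion are complete.
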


We are interested in the quadratic form associated to system \eqref{linearsystem}, namely
\begin{equation} \label{formaquadraticalineari}
\begin{split}
Q (\Psi ; \gO ) &= \int _{\gO} \left [ |\nabla \Psi |^2 + D(x) (\Psi ,\Psi )\right ] dx= \\
& \int _{\gO}\left [   \sum _{i=1}^m |\nabla \Psi _i |^2 +\sum _{i,j=1}^m  d_{ij}(x)  \Psi _i \Psi _j \right ]  \, dx 
\end{split}
\end{equation}
for  $\Psi \in C_c^1 (\gO;\Rm)$ ( or $\Psi \in  H_0^1 (\gO;\Rm)$ ).\\
It is easy to see that this quadratic form coincides with the quadratic form  associated to the symmetric system
\begin{equation}  \label{symmetricsystem}  
\begin{cases} - \gD U + C(x) U = F \quad &\text{in } \gO \\
U= 0 \quad & \text{on } \de  \gO
\end{cases}
\end{equation}
i.e. 
$$
\begin{cases}
 - \gD u_1 +  c_{11} u_1+  \dots   +c_{1m} u_m &=f_1 \\
  \dots  \dots & \dots \\ 
 - \gD u_m +  c_{m1} u_1+  \dots   +c_{mm} u_m &=f_m
\end{cases}
$$
 where 
 \begin{equation}\label{matricesimmetricaassociata} C=  \frac 12 (D + D^{t}) \quad  \text { i.e. } \quad C=(c_{ij}), \quad     c_{ij} =  \frac 12 \, (d_{ij} + d_{ji})
 \end{equation}
So to study the sign of the quadratic form $Q$ we can also use the properties of the symmetric system \eqref {symmetricsystem}.\\
Therefore we review briefly the spectral theory for this kind of simmetric systems, and use it to prove some results that we need for the possible nonsymmetric system  \eqref{linearsystem}.
\begin{remark} If system  \eqref{linearsystem} is cooperative, respectively fully coupled, so is  the associate symmetric system  \eqref{symmetricsystem}. 
\end{remark}
 
 Let $\gO$ be a bounded domain in $\RN$, $N \geq 2$, and consider for $m \geq 1$ the Hilbert spaces 
 $\ldue = \textbf{L}^2 (\gO ) = \left ( L^2(\gO) \right )^m$, $\huno = \textbf{H}_0^1 (\gO)= \left ( H_0^1(\gO) \right )^m$, where if $f=(f_1, \dots , f_m)$, $g=(g_1, \dots , g_m)$ the scalar products are defined by 
 \be 
 \begin{split}
 &(f,g)_{\ldue}= \sum_{i=1}^m (f_i ,g_i )_{L^2(\gO)}=  \sum_{i=1}^m \int _{\gO} f_i \, g_i  \, dx \\ 
 &(f,g)_{\huno}= \sum_{i=1}^m (f_i ,g_i )_{H_0^1(\gO)}=  \sum_{i=1}^m \int _{\gO} \nabla f_i \,\cdot \, \nabla g_i  \, dx
\end{split}
 \ee
Let $C=C(x)=(c_{ij}(x))_{i,j=1}^m $ a symmetric matrix whose elements are bounded functions:
\be \label{ipotesiC} c_{ij} \in L^{\infty} \quad , \quad c_{ij}=c_{ji} \quad \text{ a.e. in }  \gO
\ee  
 and consider the bilinear form
 \be \label{bilinearform} B(U,\Phi)= \int _{\gO} \left [ \nabla U \cdot \nabla \Phi + C(U, \Phi) \right ] = \int _{\gO} \left [\sum_{i=1}^m  \nabla u_i \cdot \nabla \phi _i +\sum _{i,j=1}^m c_{ij}u_i \phi _j \right ] 
 \ee
 As recalled and discussed in  \cite {DamPac}, by the spectral theory of selfadjoint operators  there exist a sequence $\{\gl _j \}$ of eigenvalues, with
 $- \infty < \gl _1 \leq \gl _2 \leq \dots $, $\lim _{j \to + \infty} \gl _j = + \infty $, and a corresponding sequence of eigenfunctions $\{ W^j \}$, 
 $W^j \in \huno \cap  C^{1}(\gO;\Rm) $ that weakly solve the systems
 \be \label{sistemaautovalori}
 \begin{cases} - \gD W^j + C  W^j =   \gl _j W^j &  \text{ in }  \gO \\
  W^j =0 & \text{ on } \de \gO
 \end{cases}
 \ee
i.e. if $W^j=(w_1, \dots , w_m)$
$$
\begin{cases}
 - \gD w_1 +  c_{11} w_1+  \dots   +c_{1m} w_m &= \gl _j  w_1 \\
  \dots  \dots & \dots \\ 
 - \gD w_m +  c_{m1} w_1+  \dots   +c_{mm} w_m &=\gl _j  w_m
\end{cases}
$$ 
 that satisfy the following properties.
 In what follows if $\gO '$ is a subdomain of $\gO $ we denote by $\gl _k (\gO ')$ the eigenvalues of the same system with $\gO $ substituted by $\gO '$.
 
\begin{proposition}\label{varformautov} Suppose that $C=(c_{ij})_{i,j=1}^m $ satisfies \eqref{ipotesiC}, and let $\{\gl _j \}$, $\{ W^j \}$ be the sequences of eigenvalues and eigenfunctions that satisfy \eqref{sistemaautovalori}.\\
Define the Rayleigh quotient
\be R(V)= \frac {B(V,V)}{(V,V)_{\ldue}} \quad \text{ for } V \in \huno \quad V \neq 0
\ee
with $B(.,.)$ as in \eqref{bilinearform}.
Then the following properties hold, where  $\textbf{V}_k$ denotes a $k$-dimensional subspace of $\huno$ and the orthogonality conditions  $V \bot W_k $  or  $V \bot \textbf{V}_k $  stand for the orthogonality in $\ldue$.  
\begin{itemize}
\item[i)] $\gl _1 = \text{ min } _{V\in \huno \,, \, V \neq 0} R(V) =  \text{ min } _{V\in \huno \, , \,  (V,V)_{\ldue}=1 } B(V,V) $
\item[ii)]   $\gl _m = \text{ min } _{V\in \huno \,, \, V \neq 0 \, , \, V \bot W_1,\dots , V \bot W_{m-1}} R(V)$\\ $ =  \text{ min } _{V\in \huno \, , \,  (V,V)_{\ldue}=1 \, , \, V \bot W_1,\dots , V \bot W_{m-1} } B(V,V) $  if $m \geq 2$ 
\item[iii)] $\gl _m = \text{ min } _{\textbf{V}_m }   \text{ max } _{ V \in \textbf{V}_m \, , \, V \neq 0 } R(V) $
\item[iv)] $\gl _m = \text{ max } _{\textbf{V}_{m-1} }   \text{ min } _{ V \bot  \in \textbf{V}_{m-1 }\, , \, V \neq 0 } R(V) $
\item[v)] If $W \in \huno$, $W \neq 0$, and $R(W)= \gl _1$, then $W$ is an eigenfunction corresponding to $\gl _1$.
\item[vi)] $ \lim _{\text{ meas }(\gO ') \to 0} \gl _1 (\gO ') = + \infty $
\item[vii)] If the system is cooperative in $\gO $ and $W $ is a first eigenfunction, then $W^{+}$ and $W^{-}$ are eigenfunctions, if they do not vanish.
\item[viii)] If the system is fully coupled in $\gO $,  then the first eigenfunction does not change sign in $\gO $ and  the first eigenvalue is simple, i.e. up to scalar multiplication there is only one eigenfunction corresponding to the first eigenvalue. 
\item[ix)] Assume that the system is fully coupled in $\gO $,  $C'  = \left (  c'_{ij} \right )_{i,j=1}^m $ is another matrix that satisfies \eqref{ipotesiC}, and let $\{{ \gl '} _k  \}$, 
 be the sequence of eigenvalues  of  the corresponding system.
If $c_{ij} \geq  {c'} _{ij}  $ for any $i,j=1, \dots ,m$ then  $ \gl _1 \geq { \gl '} _1$.
\end{itemize}
\end{proposition}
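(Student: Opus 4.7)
The plan is to derive items (i)--(vi) from classical Courant--Fischer theory applied to a compact self-adjoint solution operator on $\ldue$, and then to exploit the cooperative and fully coupled structure together with Theorem \ref{SMP} to get (vii)--(ix).

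For the spectral side, the shifted bilinear form $B(\cdot,\cdot)+K(\cdot,\cdot)_{\ldue}$ is symmetric, continuous and coercive on $\huno$ for $K$ larger than $m\sup_{i,j}\|c_{ij}\|_\infty$; Lax--Milgram together with the Rellich compact embedding then yield a compact self-adjoint operator on $\ldue$ inverting the shifted problem, whose spectral decomposition produces an $L^2$-orthonormal eigenbasis $\{W^j\}$ and real eigenvalues $\gl_j\to+\infty$ solving \eqref{sistemaautovalori}, with $C^1$ regularity from standard elliptic bootstrapping. Items (i)--(iv) are then Courant--Fischer in the eigenbasis, (v) is the Euler--Lagrange condition for the minimum in (i), and (vi) reduces to the componentwise Faber--Krahn estimate $\gl_1(\gO')\ge\gl_1^{-\gD}(\gO')-m\sup_{i,j}\|c_{ij}\|_\infty$, which blows up as $\text{meas}(\gO')\to 0$.

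For (vii), cooperativity $c_{ij}\le 0$ ($i\ne j$) gives the pointwise inequality $c_{ij}|w_i||w_j|\le c_{ij}w_iw_j$, whence $B(|W|,|W|)\le B(W,W)$ while $\||W|\|_{\ldue}=\|W\|_{\ldue}$; thus $R(|W|)\le R(W)=\gl_1$, forcing equality. By (v), $|W|$ is a first eigenfunction, so by linearity $W^{\pm}=(|W|\pm W)/2$ satisfy the eigenvalue equation too and are first eigenfunctions whenever nonvanishing.

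The heart of the proof is (viii). Suppose, toward contradiction, that some $w_i$ changes sign, so both $W^+$ and $W^-$ are nonzero eigenfunctions by (vii) and satisfy $-\gD W^{\pm}+(C-\gl_1 I)W^{\pm}=0$. Since $C-\gl_1 I$ is still fully coupled, Theorem \ref{SMP} forces $W^+>0$ and $W^->0$ \emph{componentwise} in $\gO$, contradicting the disjoint supports of $w_i^+$ and $w_i^-$. Hence $W\ge 0$ or $W\le 0$, and a further application of the SMP upgrades the signed part to strict componentwise positivity. For simplicity, given two normalized first eigenfunctions $W_1,W_2>0$, set $c_0=\min_{i,x\in\overline\gO}(W_1)_i(x)/(W_2)_i(x)$, attained at some $(i_0,x_0)$ (use Hopf's lemma to extend the ratio continuously to $\de\gO$); then $W_1-c_0W_2$ is a nonnegative first eigenfunction whose $i_0$-th component either vanishes at the interior point $x_0$ (SMP forces $W_1\equiv c_0 W_2$) or has zero normal derivative at $x_0\in\de\gO$ (Hopf's lemma again forces $W_1\equiv c_0 W_2$). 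Finally, (ix) follows by testing $R_{C'}$ against the strictly positive first eigenfunction $W$ of $C$: since $c_{ij}-c'_{ij}\ge 0$ and $w_iw_j\ge 0$, one gets $R_{C'}(W)\le R_C(W)=\gl_1$, whence $\gl'_1\le\gl_1$ via (i). The main obstacle is the vector-valued SMP step in (viii): cooperativity alone does not preclude a nonzero $W^+$ from having some identically-zero components, and it is precisely full coupling that upgrades nonnegativity of the whole vector to strict positivity of \emph{every} component.
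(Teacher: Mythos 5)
The paper gives no proof of this proposition: the opening of Section~2 explicitly defers to \cite{DamPac} for the results stated there, so there is no in-paper argument to compare your proposal against. Assessed on its own merits, your plan is sound and is essentially the standard route one would take. Lax--Milgram applied to the coercive shifted form and the Rellich compact embedding give a compact self-adjoint resolvent, whence Courant--Fischer supplies (i)--(v); your bound $\gl_1(\gO')\ge \gl_1^{-\gD}(\gO')-m\sup_{i,j}\|c_{ij}\|_\infty$ together with Faber--Krahn gives (vi); the pointwise estimate $c_{ij}|w_i||w_j|\le c_{ij}w_iw_j$ for $i\ne j$ (from $c_{ij}\le 0$) yields $R(|W|)\le R(W)$ and hence (vii); and your use of Theorem~\ref{SMP} and strict positivity of the first eigenfunction in (viii)--(ix) is the right idea.

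One imprecision worth fixing in (viii): you open the contradiction with ``some $w_i$ changes sign,'' but the negation of ``$W\ge 0$ or $W\le 0$'' is the weaker assertion ``$W^+\not\equiv 0$ and $W^-\not\equiv 0$,'' which can occur with every component sign-definite (e.g.\ $w_1\ge 0$, $w_2\le 0$, both nontrivial). The repair is immediate: assume $W^+\not\equiv 0$ and $W^-\not\equiv 0$; by (vii) both are first eigenfunctions of $-\gD+(C-\gl_1 I)$, which is still fully coupled, so Theorem~\ref{SMP} forces $W^+>0$ and $W^->0$ componentwise, impossible since $w_i^+$ and $w_i^-$ have disjoint supports for every $i$. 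With the strict componentwise sign established by another application of Theorem~\ref{SMP}, your Serrin-type ratio argument for simplicity (using Hopf's lemma to control the ratio up to $\de\gO$) closes correctly, and testing $R_{C'}$ against the positive first eigenfunction of $C$ gives (ix). Apart from that rephrasing of the contradiction hypothesis, the proposal is complete.
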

 
 \smallskip
  Let us turn back to the (possibly) nonsymmetric  cooperative system  \eqref{linearsystem} with the matrix  $D=(d_{ij})_{i,j=1}^m$  satisfying 
\be \label{ipotesiDcooperativa}
d_{ij} \in L^{\infty} (\gO) \quad, \quad   d_{ij} \leq 0 \, , \quad \text{whenever } i \neq j 
\ee

In the sequel we shall indicate by $ \gl _j^{\text{(s)}}=  \gl _j^{\text{(s)}}(-\gD +D; \gO )    $ the  eigenvalues of the associated symmetric system
 \eqref{symmetricsystem}. 
  Analogously the corresponding eigenfunctions will be indicated by $W_j^{\text(s)}$.\\
 We also  denote the bilinear form associated with the symmetric system \eqref{symmetricsystem} by
   $$ B^{\text{s}}(U,\Phi)= \int _{\gO} \left [ \nabla U \cdot \nabla \Phi + C(U, \Phi) \right ] = \int _{\gO} \left [\sum_{i=1}^m  \nabla u_i \cdot \nabla \phi _i +\sum _{i,j=1}^m c_{ij}u_i \phi _j \right ]  
 $$
  As already remarked, the quadratic form \eqref{formaquadraticalineari} associated to the system \eqref{linearsystem} coincides with that associated to  system \eqref{symmetricsystem}, i.e. 
$$ Q (\Psi ; \gO  ) = \int _{\gO} |\nabla \Psi |^2 + D(x) (\Psi ,\Psi ) = B^{\text{s}}(\Psi,\Psi)
$$ 
  if   $\Psi \in  H_0^1 (\gO;\Rm)$. \par 
\medskip
\begin{definition} We say that the maximum principle holds for the operator $- \gD + D$ in an open set  $\gO ' \subseteq \gO $ if  any $U \in \textbf{H}^1 (\gO ')$ such that 
\begin{itemize}
\item $U \leq  0 $ on $ \de \gO ' $ (i.e. $U^{+} \in \textbf{H}_0^1 (\gO ')$) 
\item  $- \gD U + D(x)U \leq 0 $ in $\gO ' $ (i.e. 
$\int \nabla U \cdot  \nabla \Phi + D(x) (U, \Phi ) \leq 0 $
for any nonnnegative  $\Phi \in \textbf{H}_0^1 (\gO ') $)
\end{itemize}
 satisfies $U \leq 0 $ a.e. in $\gO $.
\end{definition}
Let us denote by $  \gl _j^{\text{(s)}}  (\gO ') >0  $ the sequence of the eigenvalues of the symmetric system in an open set $\gO ' \subseteq \gO$.
\begin{theorem}\label{WMP} [Sufficient condition for weak maximum principle] Under the hypothesis \eqref{ipotesiDcooperativa}, if
 $  \gl _1^{\text{(s)}}  (\gO ') >0  $ then the maximum principle holds for $- \gD + D$ in $\gO ' \subseteq \gO $. \\
\end{theorem}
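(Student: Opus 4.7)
The plan is a systems--analogue of the standard scalar argument by which positivity of the first eigenvalue implies the weak maximum principle: I would use $U^+$ as a test vector, pass to the symmetric bilinear form via the trick recalled in Section 2, and then apply the Rayleigh--quotient characterization of $\gl_1^{\text{(s)}}(\gO')$ provided by Proposition \ref{varformautov} (i).

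Since $U \leq 0$ on $\de \gO'$ means exactly $U^+ \in \huno(\gO')$, and $U^+$ is componentwise nonnegative, I may plug $\Phi = U^+$ into the weak formulation of $-\gD U + D(x)U \leq 0$. Componentwise $\nabla u_i \cdot \nabla u_i^+ = |\nabla u_i^+|^2$, so this yields
$$ \int_{\gO'} |\nabla U^+|^2 \, dx + \int_{\gO'} D(x)(U, U^+)\, dx \leq 0. $$
The decisive step is then the pointwise inequality $D(x)(U, U^+) \geq D(x)(U^+, U^+)$. Decomposing $U = U^+ - U^-$ and inspecting $\sum_{i,j} d_{ij}u_i u_j^+$, the diagonal terms satisfy $d_{ii} u_i u_i^+ = d_{ii}(u_i^+)^2$, while each off--diagonal contribution obeys $d_{ij}u_i u_j^+ - d_{ij}u_i^+ u_j^+ = -d_{ij}u_i^- u_j^+ \geq 0$, by cooperativity \eqref{ipotesiDcooperativa} (so $d_{ij} \leq 0$ for $i \neq j$) and the trivial fact that $u_i^- u_j^+ \geq 0$. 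This is the only point at which the structural hypothesis on $D$ enters.

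Feeding this back and invoking the key observation from Section 2 that $D(V,V) = C(V,V)$ for every $V \in \Rm$ (the antisymmetric part of $D$ contributes nothing to a quadratic form), I obtain
$$ B^{\text{s}}(U^+, U^+) = \int_{\gO'} |\nabla U^+|^2\, dx + \int_{\gO'} C(x)(U^+, U^+)\, dx \leq 0. $$
By Proposition \ref{varformautov} (i), $B^{\text{s}}(V,V) \geq \gl_1^{\text{(s)}}(\gO')\, \|V\|_{\ldue(\gO')}^2$ for every $V \in \huno(\gO')$; applied to $V = U^+$ and combined with $\gl_1^{\text{(s)}}(\gO') > 0$, this forces $\|U^+\|_{\ldue(\gO')} = 0$, so $U \leq 0$ a.e.\ in $\gO'$. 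I do not anticipate a real obstacle: the only care needed is the pointwise manipulation above and the routine verification that $U^+$ qualifies as a nonnegative test vector in $\huno(\gO')$.
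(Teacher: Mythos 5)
Your argument is correct: testing the weak inequality with $U^+$, using cooperativity ($d_{ij}\leq 0$ for $i\neq j$) to get the pointwise bound $D(x)(U,U^+)\geq D(x)(U^+,U^+)$, passing to the symmetric form $B^{\text{s}}$, and invoking the Rayleigh-quotient characterization of $\gl_1^{\text{(s)}}(\gO')$ from Proposition \ref{varformautov} i) forces $U^+\equiv 0$. The paper does not reprove this statement (it is recalled from \cite{DamPac}), but the proof there is essentially this same standard argument, so there is nothing to add.
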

  
 Almost immediate consequences of the previous theorem are the  following ''Classical'' and ''Small measure'' forms of the weak maximum principle (see   \cite{BS2}, \cite{deFM},  \cite{PrW}, \cite {Si}).

\begin{corollary}  
\begin{itemize}
\item[i) ] If \eqref{weaklycoupled} holds and  $D$ is a.e. nonnegative definite in $\gO ' $ then the maximum principle holds for
$- \gD + D$ in $ \gO ' $. 
\item[ii) ]
There exists $\gd >0 $, depending on $D$, such that for any subdomain $\gO ' \subseteq \gO $ the maximum principle holds for $- \gD + D$ in $\gO ' \subseteq \gO $ provided $|\gO ' | \leq \gd$. 
\end{itemize}
\end{corollary}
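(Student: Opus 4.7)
The plan is to use $U^+=(u_1^+,\dots,u_m^+)$ as a test function in the weak differential inequality, exploit the cooperativity condition \eqref{ipotesiDcooperativa} to dominate the resulting bilinear expression by the symmetric quadratic form, and conclude via the Rayleigh quotient characterization of $\lambda_1^{(s)}(\Omega')$ provided by Proposition \ref{varformautov}(i).

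First I would note that the hypothesis $U^+\in\mathbf{H}_0^1(\Omega')$ means $U^+$ is an admissible, nonnegative test function. Plugging it into the weak form of $-\Delta U+D(x)U\le 0$ gives
\begin{equation*}
\int_{\Omega'}\sum_{i=1}^m\nabla u_i\cdot\nabla u_i^+\;dx+\int_{\Omega'}\sum_{i,j=1}^m d_{ij}(x)\,u_i\,u_j^+\;dx\le 0.
\end{equation*}
The first integral equals $\int_{\Omega'}|\nabla U^+|^2$ since $\nabla u_i\cdot\nabla u_i^+=|\nabla u_i^+|^2$ a.e. For the zero-order term, I write $u_i=u_i^+-u_i^-$ and split according to $i=j$ and $i\ne j$. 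For $i=j$ the identity $u_i^+u_i^-=0$ yields $d_{ii}u_iu_i^+=d_{ii}(u_i^+)^2$. For $i\ne j$, using $d_{ij}\le 0$ from \eqref{ipotesiDcooperativa} together with $u_i^-,u_j^+\ge 0$, one has $-d_{ij}u_i^-u_j^+\ge 0$, so
\begin{equation*}
d_{ij}u_iu_j^+=d_{ij}u_i^+u_j^+-d_{ij}u_i^-u_j^+\ge d_{ij}u_i^+u_j^+.
\end{equation*}
Summing over $i,j$ gives $\sum_{i,j}d_{ij}u_iu_j^+\ge \sum_{i,j}d_{ij}u_i^+u_j^+=D(x)(U^+,U^+)$.

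Combining these two estimates,
\begin{equation*}
0\ge\int_{\Omega'}\bigl[|\nabla U^+|^2+D(x)(U^+,U^+)\bigr]dx=Q(U^+;\Omega').
\end{equation*}
Since any bilinear form in a real quadratic expression coincides with its symmetrization, $D(x)(U^+,U^+)=C(x)(U^+,U^+)$ with $C=\frac12(D+D^t)$ as in \eqref{matricesimmetricaassociata}, hence $Q(U^+;\Omega')=B^{\mathrm{s}}(U^+,U^+)$.

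Finally, because $U^+\in\mathbf{H}_0^1(\Omega')$, Proposition \ref{varformautov}(i) applied to the symmetric system on $\Omega'$ gives $B^{\mathrm{s}}(U^+,U^+)\ge \lambda_1^{(\mathrm{s})}(\Omega')\,\|U^+\|_{\mathbf{L}^2(\Omega')}^2$. Combined with the previous inequality, $\lambda_1^{(\mathrm{s})}(\Omega')\|U^+\|_{\mathbf{L}^2}^2\le 0$, and the strict positivity assumption $\lambda_1^{(\mathrm{s})}(\Omega')>0$ forces $U^+\equiv 0$, i.e.\ $U\le 0$ a.e.\ in $\Omega'$.

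The only delicate step is the componentwise manipulation of the cooperative bilinear term: it is essential that the sign condition $d_{ij}\le 0$ holds precisely for the off-diagonal entries, since the pointwise identity $u_iu_j^+=u_i^+u_j^+-u_i^-u_j^+$ has no fixed sign unless paired with nonpositive coefficients. Everything else is a direct application of the variational formula for $\lambda_1^{(\mathrm{s})}$, so the heart of the argument is exactly this reduction from the (possibly non-symmetric) inequality to the symmetric quadratic form $B^{\mathrm{s}}$.
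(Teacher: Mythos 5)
There is a genuine gap: what you have proved is Theorem \ref{WMP} (the statement immediately preceding the corollary), not the corollary itself. Your test-function computation with $U^+$, the cooperativity trick $d_{ij}u_iu_j^+\ge d_{ij}u_i^+u_j^+$ for $i\ne j$, and the identification $Q(U^+;\gO')=B^{\mathrm{s}}(U^+,U^+)\ge \gl_1^{\text{(s)}}(\gO')\|U^+\|^2_{\ldue}$ are all correct, but in the last step you invoke ``the strict positivity assumption $\gl_1^{\text{(s)}}(\gO')>0$'', which is the hypothesis of Theorem \ref{WMP} and is \emph{not} among the hypotheses of the corollary. The corollary's entire content is the passage from its hypotheses i) and ii) to that positivity (or around it), and neither hypothesis is used anywhere in your argument: you never use that $D$ is a.e.\ nonnegative definite, and you never touch the smallness of $|\gO'|$, so part ii) is not addressed at all.

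To close the gap: for i), nonnegative definiteness of $D(x)$ gives $D(x)(\xi,\xi)=C(x)(\xi,\xi)\ge 0$, hence either directly $0\ge Q(U^+;\gO')\ge\int_{\gO'}|\nabla U^+|^2$, which with $U^+\in\huno(\gO')$ and Poincar\'e forces $U^+\equiv 0$, or equivalently $\gl_1^{\text{(s)}}(\gO')\ge\gl_1(-\gD;\gO')>0$ and Theorem \ref{WMP} applies. For ii), you must invoke Proposition \ref{varformautov} vi): $\gl_1^{\text{(s)}}(\gO')\to+\infty$ as $\text{meas}(\gO')\to 0$, with the threshold controlled only by the $L^{\infty}$ bounds on the entries of $D$ (e.g.\ via $B^{\mathrm{s}}(V,V)\ge\int|\nabla V|^2-m\max_{ij}\|d_{ij}\|_{\infty}\|V\|^2_{\ldue}$ together with the Faber--Krahn/Sobolev lower bound $\gl_1(-\gD;\gO')\ge c_N|\gO'|^{-2/N}$), so that there exists $\gd>0$ depending only on $D$ with $\gl_1^{\text{(s)}}(\gO')>0$ whenever $|\gO'|\le\gd$; then Theorem \ref{WMP} (or your own rederivation of it) concludes. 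This is exactly the route intended by the paper, which presents the corollary as an almost immediate consequence of Theorem \ref{WMP} combined with these two observations.
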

  
Obviously the converse of Theorem \ref{WMP} holds if $D=C$ is symmetric: if the maximum principle holds for $- \gD + C$ in $\gO '$ then $\gl _1^{\text{(s)}} (\gO ' )>0$.
In fact if $\gl _1^{\text{(s)}} (\gO ' )\leq 0 $ since the system is cooperative (and symmetric) there exists a nontrivial nonnegative first eigenfunction $\Phi _1 \geq 0 $, 
$\Phi \not \equiv 0$, and  the maximum principle does not hold, since $- \gD \Phi _1 + C\, \Phi _1 = \gl _1 \Phi _1 \leq 0 $ in $\gO '$, $\Phi _1 =0 $ on $\de \gO '$, 
while  $\Phi _1 \geq 0 $ and
$\Phi _1 \neq 0$.\\
However this is not true for general nonsymmetric systems. Roughly speaking the reason is that there is an equivalence between the validity of the maximum principle for the operator $- \Delta + D$ and the positivity of its  principal eigenvalue $\tilde {\gl _1}$, whose definition  is given below, and  the inequality 
$\tilde {\gl _1} (\gO ') \geq    \gl _1^{\text{(s)}}  (\gO ' )  
$,  which can be  strict, holds. \par  
\medskip

More precisely we recall  that the \emph{principal eigenvalue}  of the operator \\
 $- \gD + D$ in an open set $\gO ' \subseteq \gO $ is defined as  
\be
\begin{split}
 \tilde {\gl _1} (\gO  ') &= 
   \sup \{ \gl \in \R : \exists \, \Psi \in W^{2,N}_{loc} (\gO ' ; \Rm) \; \text { s.t. } \\ 
&      \Psi  >0   - \gD \Psi + D(x) \Psi - \gl \Psi \geq 0 \text{ in } \gO '\}
\end{split}
\ee
(see \cite{BS2} and the references therein, and also \cite{BNV} for the case of scalar equations).\\
We then have:
\begin{proposition}\label{principaleigenvalue} Suppose that the system \eqref{linearsystem} is fully coupled in an open set  $\gO ' \subseteq \gO $. Then:
\begin{itemize}
\item[i) ] there exists a positive eigenfunction $\Psi _1 \in  W^{2,N}_{loc} (\gO ' ; \Rm)$ which satisfies  
\be \label{autofprincipale}   - \gD \Psi _1 + D(x) \Psi _1 = \tilde {\gl _1 } (\gO ') \Psi _1 \text { in } \gO\, , \quad \Psi _1 >0    \text { in }  \gO ' \, , \quad  \Psi _1 =0    \text { on } \de \gO ' 
\ee
Moreover the principal eigenvalue is simple, i.e. any function that satisfy \eqref{autofprincipale} must be a multiple of $\Psi _1$.
\item[ii) ]  the maximum principle holds for the operator $- \gD + D$ in $\gO '$ if and only if $\tilde {\gl _1} (\gO ' ) >0$
\item[iii) ] if there exists a positive function $\Psi \in  W^{2,N}_{loc} (\gO ' ; \Rm) $ such that   $\Psi  >0 , \,  - \gD \Psi + D(x) \Psi  \geq 0 $ in $\gO '$, then either 
$\tilde {\gl _1} (\gO  ') >0$ or $\tilde {\gl _1} (\gO ') =0 $ and $\Psi = c\, \Psi _1$ for some constant $c$.
\item[iv) ]  $\tilde {\gl _1} (\gO ' ) \geq  \gl _1^{\text{(s)}}  (\gO ' ) $, with equality if and only if $\Psi _1 $ is also the first eigenfunction of the symmetric operator $- \gD + C$ in $\gO '$ ,  $ C=\frac 12 (D + D^{\text{t}})$.
If  this is the case the equality $C(x) \Psi _1 =D(x) \Psi _1$ holds and,  if $m=2$, this implies that $d_{12}= d_{21}$.
\end{itemize}
\end{proposition}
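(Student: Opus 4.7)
Parts (i), (ii) and (iii) are essentially a restatement of standard Krein--Rutman theory for cooperative fully coupled elliptic systems, and I would treat them by invoking the arguments of \cite{BS2} (and the scalar analogue of \cite{BNV}). For (i), the plan is to apply the Krein--Rutman theorem to the resolvent operator $T_\sigma = (-\gD + D + \sigma I)^{-1}$ on $\ldue(\gO')$, for $\sigma$ large enough that $T_\sigma$ is well defined, compact, and strongly positive on the cone of componentwise nonnegative functions; the strong positivity comes from Theorem \ref{SMP}, whose hypotheses are precisely that the system be weakly and fully coupled in $\gO'$. This produces the simple positive principal eigenfunction $\Psi _1 \in W^{2,N}_{loc}(\gO';\Rm)$. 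Part (ii) is immediate in one direction (if $\tilde{\gl _1}(\gO') \leq 0$ the eigenfunction $\Psi _1$ itself violates the maximum principle), and the converse is the usual adaptation of the Berestycki--Nirenberg--Varadhan argument, using $\Psi _1$ as a weight. Part (iii) follows from (i)--(ii) by rescaling a supersolution $\Psi$ against $t \Psi _1$ and sliding $t$ until first contact.

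The main new content is (iv), which I would prove directly by a Rayleigh quotient comparison. Starting from $-\gD \Psi _1 + D \Psi _1 = \tilde{\gl _1}(\gO')\, \Psi _1$ and testing this equation against $\Psi _1$ itself in $\ldue(\gO')$, one first checks that $\Psi _1 \in \huno(\gO';\Rm)$ (which follows from the $W^{2,N}_{loc}$ regularity together with the vanishing of $\Psi _1$ on $\de \gO '$), so integration by parts is legitimate. Using the key pointwise identity $(D(x) V)\cdot V = (C(x) V)\cdot V$ for every $V \in \Rm$ (so that $D$ and its symmetric part $C$ induce the same quadratic form, as remarked in Section~\ref{se:2}), one obtains
\[
\tilde{\gl _1}(\gO') \int _{\gO '} |\Psi _1|^2 \,dx = \int _{\gO '} \bigl(|\nabla \Psi _1|^2 + D(\Psi _1,\Psi _1)\bigr)\,dx = B^{\text{s}}(\Psi _1, \Psi _1).
\]
By the variational characterization in Proposition \ref{varformautov}(i), the right-hand side is $\geq \gl _1^{\text{(s)}}(\gO ') \int _{\gO '} |\Psi _1|^2\, dx$, and the inequality $\tilde{\gl _1} \geq \gl _1^{\text{(s)}}$ follows.

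If equality holds, then $\Psi _1$ realises the minimum of the Rayleigh quotient for the symmetric problem on $\gO '$, so by Proposition \ref{varformautov}(v) it is a first eigenfunction of $-\gD + C$ there, satisfying $-\gD \Psi _1 + C \Psi _1 = \gl _1^{\text{(s)}} \Psi _1$. Subtracting this from the equation $-\gD \Psi _1 + D \Psi _1 = \tilde{\gl _1}\Psi _1$ and using $\tilde{\gl _1} = \gl _1^{\text{(s)}}$ yields $C(x) \Psi _1(x) = D(x) \Psi _1(x)$ a.e. in $\gO '$. For $m=2$, reading this off componentwise gives $(c_{12}-d_{12})\psi _2 = 0$ and $(c_{21}-d_{21})\psi _1 = 0$; since $\psi _1, \psi _2 > 0$ in $\gO '$, we conclude $c_{12}=d_{12}$ and $c_{21}=d_{21}$, and combined with $c_{12}=c_{21}$ this forces $d_{12}=d_{21}$. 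The only point requiring real care is the regularity step $\Psi _1 \in \huno$ that justifies the integration by parts; the rest is a direct computation once (i) is in hand, and I expect the entire argument to fit in a few lines.
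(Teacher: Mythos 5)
Your proposal is correct and follows essentially the route the paper relies on: the paper itself gives no proof of this proposition (Section~\ref{se:2} recalls it from \cite{DamPac}, which in turn defers parts i)--iii) to \cite{BS2}, \cite{BNV}), and your Krein--Rutman/BNV treatment of i)--iii) together with the Rayleigh-quotient comparison $\tilde {\gl _1}\int_{\gO'}|\Psi_1|^2 = B^{\text{s}}(\Psi_1,\Psi_1)\geq \gl_1^{\text{(s)}}\int_{\gO'}|\Psi_1|^2$, the equality analysis via Proposition~\ref{varformautov}(v), and the componentwise argument for $m=2$ are exactly the standard argument for iv). The one point you flag, $\Psi_1\in\huno(\gO')$, is in fact automatic within your own construction (since $\Psi_1$ is a fixed point of $(\tilde{\gl _1}+\sigma)T_\sigma$ and $T_\sigma$ maps $\ldue(\gO')$ into $\huno(\gO')$), and the unstated converse implication in the ``if and only if'' of iv) follows immediately from the same displayed identity, so only a sentence is missing there.
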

 
 \section{  \textbf{Proof of symmetry results} }
 Let us now consider the system \eqref{modprob}:
 $$
\begin{cases} - \gD U =F(|x|,U) \quad &\text{in } \gO \\
U= 0 \quad & \text{on } \de  \gO
\end{cases}
$$
where $\Omega $ is a ball or an annulus in $\RN$,  $F=(f_1 , \dots , f_m)$ is a function belonging to $C^{1, \ga }([0,+\infty )\times \Rm; \Rm)$ and $U=(u_1, \dots ,u_m)$ is a vector valued function in $\gO$,  $m, N \geq 2$.\\ 
In Section 1 we defined the Morse index of a solution through the quadratic form  $Q_U$ defined in \eqref{quadraticformlineariz} associated to the linearized operator at a solution $U$, i.e. to the linear operator
\be \label{linearizedoperator} L_U (V) = - \gD V - J_F (|x|, U) V 
\ee 
 As remarked, it coincides with the  quadratic form corresponding to  the  selfadjoint operator 
 \be \label{symmetriclinearizedoperator} \tilde {L}_U (V) = - \gD V - \frac 12 \left (J_F (|x|, U) + J_F ^{\text{t}} (|x|, U)  \right )  V 
\ee 
where $J_F ^{\text{t}}$ is the transpose of the matrix $J_F $.\par 
 Hence  if $\gl _k $ and $W^k$ denote the \emph{symmetric} eigenvalues and eigenfunctions of $L_U$, i.e. $W^k$ satisfy  
 $$ 
  \begin{cases} - \gD W^k + C  W^k =   \gl _k  W^k &  \text{ in }  \gO \\
 W^k =0 & \text{ on } \de \gO \; , 
 \end{cases}
$$
where $C= c_{ij}(x)$,  $c_{ij}(x)= \frac 12 \left [ \frac {\de f_i}{\de u_j}(|x|,U(x)) + \frac {\de f_j}{\de u_i}(|x|,U(x))\right ]$ ,  \\
 as in the scalar case it is easy to prove (see \cite {DamPac}) the following

\begin{proposition} Let $\gO $ be a bounded domain in $\RN$. Then the Morse index of a solution $U$ to \eqref{modprob} equals the  number of negative \emph{symmetric} eigenvalues of the linearized operator $L_U$.
\end{proposition}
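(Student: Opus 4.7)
The plan is to exploit the identity $Q_U(\Psi;\gO)=B^{\text{s}}(\Psi,\Psi)$ (which holds because the linearized operator and its symmetrization differ by an antisymmetric term that vanishes in the quadratic form) and then read off the Morse index from the spectrum of the self-adjoint operator $\tilde L_U$ via its eigenfunction expansion. Let $\{\gl_k\}$, $\{W^k\}$ be the symmetric eigenvalues and eigenfunctions, with the $W^k$ forming an orthonormal basis of $\ldue$; let $\mu$ denote the number of strictly negative $\gl_k$ (counted with multiplicity) and set $E_-=\text{span}\{W^k:\gl_k<0\}$, $E_+=\text{span}\{W^k:\gl_k\geq 0\}$, the $\ldue$-orthogonal complement of $E_-$ in $\huno$.

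For the inequality $\gm(U)\geq \mu$, I would show that $Q_U$ is negative definite on $E_-$: for $\Psi=\sum_{\gl_k<0}c_kW^k\in E_-\setminus\{0\}$,
\begin{equation*}
Q_U(\Psi;\gO)=B^{\text{s}}(\Psi,\Psi)=\sum_{\gl_k<0}\gl_k\,c_k^2\,\|W^k\|_{\ldue}^2<0.
\end{equation*}
Since $\dim E_-=\mu$ and $C_c^1(\gO;\Rm)$ is dense in $\huno$, I can approximate a basis of $E_-$ by $C_c^1$ vector fields; by continuity of $Q_U$ on $\huno$, a sufficiently close approximating $\mu$-dimensional subspace of $C_c^1(\gO;\Rm)$ still has $Q_U$ negative definite, hence $\gm(U)\geq \mu$.

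For the reverse inequality $\gm(U)\leq \mu$, suppose for contradiction that there exists a subspace $V\subseteq C_c^1(\gO;\Rm)$ with $\dim V=\mu+1$ on which $Q_U$ is negative definite. Since $E_-$ has dimension $\mu$ inside $\ldue$, a dimension count forces $V\cap E_+\neq\{0\}$ (any $\Psi\in V$ whose first $\mu$ Fourier coefficients against $W^1,\dots,W^\mu$ vanish lies in $E_+$, and this is one linear condition per eliminated coefficient, giving at least $(\mu+1)-\mu=1$ nontrivial solution). For any $\Psi\in V\cap E_+\setminus\{0\}$, expanding $\Psi=\sum_{\gl_k\geq 0}c_kW^k$ in $\ldue$ and using Parseval together with $B^{\text{s}}$-orthogonality of the $W^k$ yields
\begin{equation*}
Q_U(\Psi;\gO)=\sum_{\gl_k\geq 0}\gl_k\,c_k^2\,\|W^k\|_{\ldue}^2\geq 0,
\end{equation*}
contradicting negative definiteness of $Q_U$ on $V$. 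Hence $\gm(U)\leq\mu$, completing the proof.

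The main technical care is the density/continuity step that lets the Morse index, a priori defined on $C_c^1$, be computed with $\huno$ test fields; this is routine because $Q_U$ is continuous in the $\huno$ norm (the coefficients $\de f_i/\de u_j(|x|,U)$ are bounded on $\ov\gO$ by $C^{1,\ga}$ regularity of $F$ and continuity of $U$), so the open condition ``$Q_U$ negative definite on a finite-dimensional subspace'' is preserved under small $\huno$-perturbations of a basis. No further obstacle is expected, which matches the author's assertion that the proof goes exactly as in the scalar case.
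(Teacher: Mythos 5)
Your proof is correct, and it follows the route the paper intends: the only non-routine observation is the identity $Q_U(\Psi;\gO)=B^{\text{s}}(\Psi,\Psi)$ (the crucial remark recorded in Section 2), after which the claim reduces to the standard fact that the negative index of a quadratic form equals the number of negative eigenvalues of the associated self-adjoint operator. The paper itself does not reproduce the argument (it refers to the companion preprint), so there is nothing to compare line by line; your dimension-count for the upper bound and the density-plus-continuity argument for the lower bound are exactly the expected ingredients.

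Two small points worth tightening. First, when you write $E_+=\operatorname{span}\{W^k:\gl_k\geq 0\}$ and simultaneously call it the $\ldue$-orthogonal complement of $E_-$ in $\huno$, you should mean the $\huno$-closure of that algebraic span; otherwise the set equality is false, even though the argument only uses the orthogonal-complement description. Second, the step that converts $\Psi\in E_+$ into $Q_U(\Psi;\gO)=\sum_{\gl_k\geq 0}\gl_k c_k^2\geq 0$ implicitly uses that the eigenfunction expansion of $\Psi\in\huno$ converges not merely in $\ldue$ but in the form norm of $B^{\text{s}}$ (equivalently in $\huno$, after shifting $\tilde L_U$ by a large constant to make it positive). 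This is routine spectral theory for a bounded-below self-adjoint operator with compact resolvent, but it is precisely the point at which Parseval alone is insufficient, so it is worth one sentence.
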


 \medskip
\subsection{Preliminary results}
Let $e\in S^{N-1}$ be a direction, i.e. $e \in \RN $, $|e|=1$,  and let us define the hyperplane $T(e)$ and the  ''cap'' $\gO (e) $ as
$$ T(e)  = \{ x \in \RN : x \cdot e=0\}\; , \quad  \gO (e)= \{ x \in \gO : x \cdot e >0\}  $$
 Moreover if $x \in \gO $ let us denote by $\gs _{e}(x)$ the reflection of $x$ through the hyperplane $T(e)$ and by $U^{\gs _{e}}$ the function $U \circ \gs _{e} $ .\\
 
  \begin{lemma}\label{lemma1} 
  \begin{itemize}
\item  Assume that $U$ is a solution of \eqref{modprob} and that the system is fully coupled along $U$ in $\gO $.
  Let us define  for any direction $e\in S^{N-1}$ the matrix $\; B^{e}(x) =\left (b_{ij}^{e}(x) \right )_{i,j=1}^m $, where
\be\nonumber  
 b_{ij}^{e}(x) = - \int _0^1 \frac {\de f _i} {\de u_j} \left [|x|, tU(x)+ (1-t)U^{\gs _e}(x)  \right ] \, dt  
\ee 
   Then  the function   $W^{e}=U-U^{\gs _{e}} = (w_1, \dots , w_m)$  satisfies  (in $\gO$ and) in  $\gO (e)$ the linear system 
\be \label{EquazDifferenza}
\begin{cases}  - \gD W + B^{e}(x) W & =0 \quad \text{ in }  \gO (e) \\
W& =0 \quad \text{ on }  \de \gO (e)
\end{cases}
\ee 
 which  is fully coupled in $\gO $ and $\gO (e)$ for any  $e\in S^{N-1}$.\\
\item If also hypothesis ii)  of Theorem \ref{f'convessa} holds, and we define for any direction $e\in S^{N-1}$
  $\quad  B^{e,s}(x) =\left (b_{ij}^{e,s}(x) \right )_{i,j=1}^m  \quad $, where
\be \label{DefCoeffSimm} b_{ij}^{e,s} (x) =  - \frac 12 \left(  \frac {\de f _i} {\de u_j} (|x|,U(x)) +  \frac {\de f _i} {\de u_j} (|x|,U^{\gs _e}(x)) \right ) 
\ee
then the linear system with matrix $B^{e,s} $
is fully coupled as well in $\gO $ and $\gO (e)$ for any  $e\in S^{N-1}$. \\
 Moreover for any  $i,j=1,\dots ,m$  and $ x \in \gO $ \\
\be \label{CfrCoefficienti} b_{ij}^{e} (x)\;  \geq \; b_{ij}^{e,s} (x)  
\ee
and the inequality is strict  for any $ i_0$, $ j_0$ such that
  $\frac {\de f _{i_0}} {\de u_{j_0}}$ satisfies the strict convexity assumption  \eqref{strictconvexity} if  $u_k(x) \neq u^{\gs _{e}}_k(x)  $
 for any $k \in \{ 1, \dots ,m  \}$. \\
 As a consequence for the  quadratic forms $ Q^{e} $ and $ Q^{e,s} $  associated to the matrixes $ B^{e} $ and $ B^{e,s} $ we have that
\begin{multline} \label {cfrformequadratiche} 0= Q^{e} (W^{e}; \gO (e)) = \int _{\gO (e)} \left [ |\nabla W^{e} |^2 + B^{e}(W^{e} ,W^{e} )\right ] dx  \\
\geq  \int _{\gO (e)} \left [ |\nabla W^{e} |^2 + B^{e,s}(W^{e} ,W^{e} )\right ] dx  = Q^{e,s}(W^{e}; \gO (e)) 
\end{multline}
with strict inequality if $F$ satisfies the hypothesis b) of Theorem \ref{corollario1}  and $W^{e}_k \neq 0$ for any $k \in \{ 1, \dots ,m  \}$.
\end{itemize} 

\end{lemma}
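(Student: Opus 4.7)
The plan is to derive the PDE satisfied by $W^e = U - U^{\gs_e}$ from the fundamental theorem of calculus, then in sequence verify cooperativity, full coupling, the entrywise comparison \eqref{CfrCoefficienti}, and finally the quadratic form comparison \eqref{cfrformequadratiche}. Writing $V(t,x) := tU(x) + (1-t)U^{\gs_e}(x)$ and exploiting that $|\gs_e x| = |x|$ so that $U^{\gs_e}$ solves the same system as $U$, I will subtract the two equations and apply the integral Taylor expansion
\[
F(|x|,U)-F(|x|,U^{\gs_e}) = \int_0^1 J_F(|x|,V(t))\, W^e \, dt,
\]
which immediately yields $-\gD W^e + B^e(x) W^e = 0$ in $\gO(e)$. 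The boundary datum $W^e \equiv 0$ on $\de \gO(e)$ is automatic since the reflection fixes $T(e) \cap \overline{\gO}$ and since both $U$ and $U^{\gs_e}$ vanish on $\de\gO$.

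For cooperativity of $B^e$ and of $B^{e,s}$, hypothesis i) forces $\frac{\de f_i}{\de u_j} \geq 0$ whenever $i \neq j$, a sign preserved both by the integral in $t$ and by the endpoint average. Full coupling is handled by a reflection argument: fixing a partition $(I,J)$, the full coupling hypothesis supplies $i_0 \in I$, $j_0 \in J$ with $\frac{\de f_{i_0}}{\de u_{j_0}}(|x|,U(x)) > 0$ on a set $E \subset \gO$ of positive measure. Since $T(e) \cap \gO$ is negligible, either $|E \cap \gO(e)|>0$ or $|E \cap \gO(-e)|>0$. In the first case, continuity of the integrand in $t$ near $t = 1$ forces $b_{i_0 j_0}^e(x) < 0$ on $E \cap \gO(e)$; in the second, the reflection $x \mapsto \gs_e x$ carries the positivity to a set of positive measure in $\gO(e)$ where the integrand is positive near $t = 0$. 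The same reasoning, with the endpoint values appearing directly in $b_{ij}^{e,s}$, yields full coupling for $B^{e,s}$.

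The entrywise inequality \eqref{CfrCoefficienti} is a direct consequence of hypothesis ii). Applying the scalar convexity of $\frac{\de f_i}{\de u_j}(|x|,\cdot)$ along the segment $V(t)$ and integrating in $t$,
\[
\int_0^1 \frac{\de f_i}{\de u_j}(|x|,V(t))\,dt \;\leq\; \tfrac{1}{2}\left[\frac{\de f_i}{\de u_j}(|x|,U) + \frac{\de f_i}{\de u_j}(|x|,U^{\gs_e})\right],
\]
and negating gives $b_{ij}^e(x) \geq b_{ij}^{e,s}(x)$. Under \eqref{strictconvexity}, at any $x$ where $u_k(x) \neq u_k^{\gs_e}(x)$ for every $k$, the convexity inequality is strict for all $t \in (0,1)$, and integration preserves strictness for the distinguished pair $(i_0, j_0)$.

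The quadratic form comparison \eqref{cfrformequadratiche} closes the lemma. Testing the PDE $-\gD W^e + B^e W^e = 0$ against $W^e \in \huno(\gO(e); \Rm)$ gives $Q^e(W^e; \gO(e)) = 0$; the remaining inequality $Q^e(W^e; \gO(e)) \geq Q^{e,s}(W^e; \gO(e))$ reduces to $\int_{\gO(e)} (B^e - B^{e,s})(W^e,W^e)\, dx \geq 0$. This is the main obstacle: the entrywise bound $b_{ij}^e \geq b_{ij}^{e,s}$ does not make the bilinear form pointwise nonnegative, because the off-diagonal products $w_i^e w_j^e$ may have either sign. The resolution is to apply hypothesis ii) at the level of the full scalar function $S \mapsto (W^e)^T J_F(|x|,S)\,W^e$ along the segment from $U^{\gs_e}(x)$ to $U(x)$, so as to harvest the convexity directly in the form adapted to $W^e$, with strict inequality inherited from \eqref{strictconvexity} as soon as no component of $W^e$ vanishes.
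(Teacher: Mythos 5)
Your treatment of the first three items coincides with the paper's proof: the integral Taylor expansion giving \eqref{EquazDifferenza}, cooperativity of $B^{e}$ and $B^{e,s}$ from $\frac{\de f_i}{\de u_j}\geq 0$ ($i\neq j$), full coupling via nonnegativity plus continuity of the integrand near the endpoints of the $t$-integral together with the reflection invariance $B^{e}\circ\gs_e=B^{e}$, and the entrywise comparison \eqref{CfrCoefficienti} (with its strict version under \eqref{strictconvexity}) by integrating the convexity inequality along the segment. All of this is correct and is essentially the argument in the paper.

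The final step, however, contains a genuine gap. You rightly point out that \eqref{CfrCoefficienti} alone does not give $\int_{\gO(e)}\bigl(B^{e}-B^{e,s}\bigr)(W^{e},W^{e})\,dx\geq 0$, because the off-diagonal products $w_iw_j$ may be negative; but your proposed resolution fails for exactly the same reason. The scalar function $S\mapsto (W^{e})^{t}J_F(|x|,S)\,W^{e}=\sum_{i,j}w_i(x)w_j(x)\frac{\de f_i}{\de u_j}(|x|,S)$ is a linear combination of the convex entries with coefficients $w_i(x)w_j(x)$ of arbitrary sign, so hypothesis ii) does \emph{not} make it convex, and the trapezoid-versus-mean comparison you would need along the segment from $U^{\gs_e}(x)$ to $U(x)$ is unavailable; the sign obstruction you identified is merely repackaged, not removed. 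The missing idea is the sign of $W^{e}$ itself: the quadratic-form comparison, and in particular its strict version under hypothesis b), is only needed (and is only invoked in the proof of Theorem \ref{f'convessa}) when $W^{e}$ is a first eigenfunction of the fully coupled cooperative system \eqref{EquazDifferenza}, hence of one strict sign in $\gO(e)$ by Proposition \ref{varformautov} viii) and Theorem \ref{SMP}. Then $w_iw_j> 0$ pointwise, so multiplying \eqref{CfrCoefficienti} by $w_iw_j$ and integrating gives \eqref{cfrformequadratiche}, strictly when some $\frac{\de f_{i_0}}{\de u_{j_0}}$ satisfies \eqref{strictconvexity}, since then no component of $W^{e}$ vanishes in $\gO(e)$. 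So you should either carry the one-signedness of $W^{e}$ as the hypothesis under which the comparison of quadratic forms is proved, or observe that for sign-changing $W^{e}$ the lemma is only ever exploited through the entrywise inequality \eqref{CfrCoefficienti} and the eigenvalue monotonicity of Proposition \ref{varformautov} ix), never through a pointwise estimate of $\bigl(B^{e}-B^{e,s}\bigr)(W^{e},W^{e})$.
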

\begin{proof} From the equation $- \gD U = F (|x|,U(x)) $ we deduce that the reflected function $U^{\gs _{e}} $ satisfies the equation
$- \gD U^{\gs _{e}}  = F (|x|,U^{\gs _{e}}(x) ) $ and hence for the difference $W^{e}=U-U^{\gs _{e}}= \left ( u_1- u^{\gs _e}_1, \dots , u_m- u^{\gs _e}_m   \right ) $  we have \ \ \ \ $- \gD W^{e}  = F(|x|, U)-F(|x|, U^{\gs _{e}})$ \ \ \ so that 
$$- \gD w_i  =  f_i(|x|, U)-f_i(|x|, U^{\gs _{e}})=$$
$$ \quad  \sum _{j=1}^m \int _0^1  \frac {\de f _i} {\de u_j} \left [ |x|, tU(x)+ (1-t)U^{\gs _e}(x)  \right ]  dt   (u_j- u^{\gs _e}_j )  $$ 
and \eqref{EquazDifferenza} follows.\\
Since the system \eqref{modprob} is weakly coupled,  $ \frac {\de f _i} {\de u_j} \geq 0  $ for any $i \neq j$ , so that $b_{ij}^{e} \leq 0 $ and the system \eqref{EquazDifferenza} is weakly coupled as well.\\
To see that the system \eqref{EquazDifferenza} is also fully coupled let us show that if $i_0, j_0 \in \{ 1,\dots ,m \}$ are such that 
$ \frac {\de f _{i_0}} {\de u_{j_0}}(|\overline{x}|,U(\overline{x})) >0$ for some $\overline{x} \in \gO$, then $b_{i_0 j_0}^{e}(\overline{x}) <0 $.
This follows immediately by the nonnegativity and the continuity of $ \frac {\de f _{i_0}} {\de u_{j_0}}  $, using the definition of $b_{i_0 j_0}^{e}$.
This implies that \eqref{EquazDifferenza} is fully coupled in $\gO $  and, since $B$ is symmetric with respect to the reflection $\gs _e $, it is fully coupled in $\gO (e)$ as well.\\
Moreover if  hypotheses ii)  of Theorem \ref{f'convessa} holds then 
\begin{multline} - b_{ij}^{e}(x) = \int _0^1   \frac {\de f _i} {\de u_j} \left [|x|, tU(x)+ (1-t)U^{\gs _e}(x)  \right ] dt \\
\leq 
\int _0^1 \left (   t\, \frac {\de f _i} {\de u_j} \left [ |x|, U(x) \right ] + (1-t) \frac {\de f _i} {\de u_j} \left [ |x|,U^{\gs _e}(x)  \right ]  \right )dt \\
=
 \frac 12 \left(  \frac {\de f _i} {\de u_j} (|x|,U(x)) +  \frac {\de f _i} {\de u_j} (|x|,U^{\gs _e}(x)) \right )= - b_{ij}^{e,s}(x) 
 \end{multline}
 This implies \eqref{CfrCoefficienti}  and the inequality is strict  for any $ i_0$, $ j_0$ such that
  $\frac {\de f _{i_0}} {\de u_{j_0}}$ satisfies the strict convexity assumption  \eqref{strictconvexity} if  $u_k(x) \neq u^{\gs _{e}}_k(x)  $
 for any $k \in \{ 1, \dots ,m  \}$. \\
This in turn implies the full coupling of the system with matrix $B^{e,s} $ and \eqref{cfrformequadratiche} if hypothesis b) of Theorem \ref{corollario1} holds.
 \end{proof} 
 
 Next, we state the following two lemmas, whose proofs can be found in \cite{DamPac}.
\begin{lemma}\label{lemma2} Let $U=(u_1, \dots , u_m)$ be a solution of \eqref{modprob} and assume that the hypothesis i) of Theorem \ref{f'convessa} holds.
 If for every $e\in S^{N-1}$ we have either $U \geq U^{\gs _{e}}$ in $\gO (e)$ or $U \leq U^{\gs _{e}}$ in $\gO (e)$, then $U$ is foliated Schwarz symmetric.
\end{lemma}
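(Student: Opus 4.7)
The plan is to combine the strong maximum principle of Theorem~\ref{SMP} with the classical trick of selecting an axis as the direction of a maximum point on an interior sphere. First, by Lemma~\ref{lemma1}, for each $e\in S^{N-1}$ the difference $W^e=U-U^{\gs_e}$ weakly solves a fully coupled linear system in $\gO(e)$ with zero boundary data. Combining the hypothesis $W^e\geq 0$ or $W^e\leq 0$ in $\gO(e)$ (componentwise) with Theorem~\ref{SMP}, I obtain the dichotomy: either $W^e\equiv 0$ on $\gO(e)$, or every component of $W^e$ has strict constant sign throughout $\gO(e)$; in the latter case Hopf's lemma further gives $\de_e w^e_i\neq 0$ on $T(e)\cap\gO$ for each $i$.

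If $U$ is radial there is nothing to prove, so I may assume that some component $u_{i_0}$ is nonconstant on some interior sphere $S_\gr\subset\gO$, and I choose $p\in S^{N-1}$ with $u_{i_0}(\gr p)=\max_{S_\gr}u_{i_0}$. Since $\gr p$ is a critical point of the restriction of $u_{i_0}$ to $S_\gr$, one has $\de_e u_{i_0}(\gr p)=0$ for every direction $e\perp p$. For such $e$, $\gs_e(\gr p)=\gr p$ and $\gr p\in T(e)\cap\gO$, hence $\de_e w^e_{i_0}(\gr p)=2\de_e u_{i_0}(\gr p)=0$. This contradicts the Hopf alternative in the first step, forcing $W^e\equiv 0$ for every $e\perp p$. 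The reflections $\{\gs_e:e\perp p\}$ generate the subgroup of $O(N)$ stabilizing the axis $\R p$, so $U(x)$ depends only on $|x|$ and $\theta=\arccos(x\cdot p/|x|)$.

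For the monotonicity in $\theta$, take any $e$ with $e\cdot p>0$: then $\gr p\in\gO(e)$ and $w^e_{i_0}(\gr p)\geq 0$ because $u_{i_0}(\gr p)$ is maximal on $S_\gr$. If the dichotomy selected the sign $W^e\leq 0$, then $w^e_{i_0}(\gr p)=0$; since $\gr p$ is an interior point of $\gO(e)$, Theorem~\ref{SMP} rules out the strict sign alternative and forces $W^e\equiv 0$. Hence $U\geq U^{\gs_e}$ in $\gO(e)$ for every $e$ in the open hemisphere $\{e\cdot p>0\}$. Given two points $x,y$ on a common sphere with $\theta(x)<\theta(y)$, taking $e$ proportional to $x-y$ yields $y=\gs_e(x)$, $x\in\gO(e)$, and $e\cdot p>0$, so $U(x)\geq U(y)$, which is exactly the required monotonicity.

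The main obstacle I anticipate is that the dichotomy in the first step produces a choice (sign or identity) that a priori depends both on $e$ and on the component of $U$, whereas the conclusion requires a single common axis and direction of decrease valid for every component simultaneously. This is overcome by concentrating on one nonconstant component $u_{i_0}$ and exploiting the full coupling of the linearized system for $W^e$ to propagate, via the strong maximum principle, the rigidity derived from $u_{i_0}$ to all remaining components.
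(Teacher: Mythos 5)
Your argument is correct. Note that the paper itself does not reproduce a proof of this lemma (it refers to \cite{DamPac}), but your route is exactly the natural one and it is sound: Lemma \ref{lemma1} makes $W^e=U-U^{\gs_e}$ a solution of a fully coupled cooperative system in $\gO(e)$, the hypothesis plus Theorem \ref{SMP} gives the dichotomy ``$W^e\equiv 0$ or strict sign with Hopf's inequality on the flat part $T(e)\cap\gO$ of $\de\gO(e)$'', and choosing $p$ so that $\gr p$ maximizes a nonradial component $u_{i_0}$ on a sphere $S_\gr\subset\gO$ kills the Hopf alternative for every $e\perp p$ (since $\nabla u_{i_0}(\gr p)$ is parallel to $p$, so $\de_e w^e_{i_0}(\gr p)=2\de_e u_{i_0}(\gr p)=0$). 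Two small points deserve explicit mention: Hopf's lemma is applied at $\gr p$, an interior point of the flat portion of $\de\gO(e)$, where the interior ball condition holds and $W^e$ vanishes identically on $T(e)$; and the reflections $\{\gs_e:e\perp p\}$ generate the pointwise stabilizer of $p$ in $O(N)$ (isomorphic to $O(N-1)$ acting on $p^{\perp}$), which is precisely what yields dependence on $(r,\theta)$ only -- your phrase ``stabilizing the axis $\R p$'' is slightly imprecise but harmless, since only the pointwise stabilizer is needed and obtained. Your treatment of the monotonicity step is also correct: for $e\cdot p>0$ the maximality of $u_{i_0}(\gr p)$ on $S_\gr$ together with the strong maximum principle (this is where full coupling propagates the vanishing of one component to all of them) excludes the alternative $W^e\leq 0$, $W^e\not\equiv 0$, and the elementary reflection computation with $e=(x-y)/|x-y|$, $|x|=|y|$, gives $\gs_e(x)=y$, $x\in\gO(e)$, $e\cdot p>0$, hence the componentwise monotonicity in $\theta$ with the single axis $p$.
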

 
 \begin{lemma}\label{lemma3} Let $U= (u_1, \dots , u_m)$  be a solution of \eqref{modprob} and assume that the hypothesis i) of Theorem \ref{f'convessa} holds.\\
Suppose that there exists a direction $e$ such that $U$ is symmetric with respect to $T(e)$ and 
the principal eigenvalue $\tilde {\gl } _1 (\gO (e)) $ of the linearized operator $L_U (V) = - \gD V - J_F (x, U) V $ in $\gO (e)$ is nonnegative.
Then $U$ is foliated Schwarz symmetric. 
\end{lemma}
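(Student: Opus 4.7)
\noindent The plan is to exploit angular derivatives of $U$ as solutions of the linearized system on the cap $\gO(e)$, and then use the hypothesis $\tilde\gl_1(\gO(e))\ge 0$ to force either their vanishing, via the maximum principle, or their proportionality to the positive principal eigenfunction supplied by Proposition \ref{principaleigenvalue}.

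For each $v\in e^{\perp}$ let $X_v:=x_e\de_v-x_v\de_e$ be the generator of rotation in the $2$-plane spanned by $e$ and $v$, and set $V^{(v)}:=X_v U$. Since $X_v$ preserves $|x|$ and is tangent to $\de\gO$ (a sphere or a union of two spheres), applying $X_v$ to the equation $-\gD U=F(|x|,U)$ yields $L_U V^{(v)}=0$ in $\gO$ with $V^{(v)}=0$ on $\de\gO$; the regularity $U\in C^{3,\ga}$ makes this legitimate. The $\gs_e$-symmetry of $U$, written in polar coordinates on the $(e,v)$-plane with angle $\phi$ measured from $e$, says that $U$ is even in $\phi$, so $V^{(v)}=\de_\phi U$ is odd under $\gs_e$ and in particular vanishes on $T(e)$. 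Restricted to $\gO(e)$, therefore, $V^{(v)}$ solves $L_U V=0$ with zero Dirichlet data. Full coupling of $L_U$ in $\gO(e)$, required to invoke Proposition \ref{principaleigenvalue}, follows from hypothesis i) of Theorem \ref{f'convessa}: since $U$ and hence each $\de f_i/\de u_j(|x|,U(x))$ is $\gs_e$-symmetric, positive-measure conditions on $\gO$ automatically descend to positive-measure conditions on $\gO(e)$.

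The argument now splits according to $\tilde\gl_1:=\tilde\gl_1(\gO(e))$. If $\tilde\gl_1>0$, Proposition \ref{principaleigenvalue}(ii) provides the maximum principle for $L_U$ on $\gO(e)$, and applying it to $\pm V^{(v)}$ forces $V^{(v)}\equiv 0$ for every $v\in e^{\perp}$. If $\tilde\gl_1=0$, simplicity of the principal eigenvalue (Proposition \ref{principaleigenvalue}(i)) forces $V^{(v)}=c(v)\,\Psi_1$ for a scalar $c(v)$, with $\Psi_1>0$ in $\gO(e)$; linearity of $v\mapsto V^{(v)}$ yields $\mathbf{c}\in e^{\perp}$ with $c(v)=\mathbf{c}\cdot v$. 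Set $p:=\mathbf{c}/|\mathbf{c}|$ when $\mathbf{c}\neq 0$; then $X_v U\equiv 0$ for every $v$ in the $(N-2)$-dimensional subspace $W:=e^{\perp}\cap p^{\perp}$. When $\tilde\gl_1>0$, or $\tilde\gl_1=0$ and $\mathbf{c}=0$, take instead $W:=e^{\perp}$ (the choice of $p$ is immaterial). In either case, because $X_v U=X_w U=0$ implies $[X_v,X_w]U=0$ and $[X_v,X_w]$ is, up to sign, the rotation generator in the $(v,w)$-plane, the Lie algebra of vector fields annihilating $U$ contains every infinitesimal rotation of $\mathrm{span}(\{e\}\cup W)=p^{\perp}$; hence $U$ is $SO(p^{\perp})$-invariant and depends only on $|x|$ and $x\cdot p$ (and in fact is fully radial when $\mathbf{c}=0$). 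Finally, when $\mathbf{c}\neq 0$, strict positivity of $V^{(p)}=|\mathbf{c}|\,\Psi_1$ in $\gO(e)$ translates, in polar coordinates on the $(e,p)$-plane, into $\de_\phi U>0$, and via $\theta=\pi/2-\phi$ into strict monotone decrease of $U$ in $\theta=\arccos(x\cdot p/|x|)$, completing the verification of Definition \ref{foliatedSS}.

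The main obstacle is the degenerate case $\tilde\gl_1=0$, where the maximum-principle shortcut is unavailable: one has to combine simplicity of the principal eigenvalue, the linearity of $v\mapsto V^{(v)}$, and the bracket identities of rotation generators to reconstruct the axis $p$ of rotational symmetry from the single linear functional $c(v)=\mathbf{c}\cdot v$ on $e^{\perp}$.
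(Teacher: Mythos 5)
Your strategy is the same one the paper relies on (the proof of this lemma is deferred to \cite{DamPac}, and the identical device --- angular derivatives solving the linearized system in $\gO(e)$ with zero Dirichlet data --- reappears in the proof of Theorem \ref{corollario1}): the dichotomy on $\tilde\gl_1(\gO(e))$, the maximum principle of Proposition \ref{principaleigenvalue}(ii) when $\tilde\gl_1>0$, the reconstruction of the axis $p$ from the linear map $v\mapsto c(v)$, the bracket argument giving invariance under rotations fixing $p$, and the translation of $V^{(p)}>0$ into strict angular monotonicity are all correct, as is the (necessary) remark that full coupling along $U$ in $\gO$ descends to $\gO(e)$ because $\frac{\de f_i}{\de u_j}(|x|,U(x))$ is $\gs_e$-symmetric. (A cosmetic slip: when $W=e^{\perp}$ one has $\mathrm{span}(\{e\}\cup W)=\RN$, not $p^{\perp}$; there $U$ is radial, as you note.)

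The one genuine gap sits at the crux of the degenerate case $\tilde\gl_1(\gO(e))=0$: you invoke Proposition \ref{principaleigenvalue}(i) to conclude $V^{(v)}=c(v)\Psi_1$, but the simplicity stated there concerns functions satisfying \eqref{autofprincipale}, i.e. \emph{positive} solutions of the eigenvalue problem, while a priori $V^{(v)}$ may change sign --- and showing that it does not is exactly the difficulty of this case. What you need is the stronger (algebraic) simplicity: for a fully coupled cooperative system with $\tilde\gl_1(\gO(e))=0$, every solution of $-\gD V - J_F(|x|,U)V=0$ in $\gO(e)$, $V=0$ on $\de\gO(e)$, is a multiple of $\Psi_1$, without any sign assumption. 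This is true and is what \cite{DamPac}/\cite{BS2} actually provide; alternatively it can be proved with the paper's own tools by a sliding comparison: if $V\not\equiv 0$ lies in the kernel and is somewhere negative, set $t^{*}=\sup\{t\ge 0:\ \Psi_1+tV\ge 0 \text{ in } \gO(e)\}\in(0,\infty)$, observe that $\Psi_1+t^{*}V\ge 0$ solves the same fully coupled cooperative system, and use Theorem \ref{SMP} (strong maximum principle plus Hopf's lemma, with the usual care at the corners of the cap) to force $\Psi_1+t^{*}V\equiv 0$; if instead $V\ge 0$, Theorem \ref{SMP} already gives it one sign. Note that the shortcut used in the proof of Theorem \ref{corollario1} --- passing through $\gl_1^{\text{s}}=0$ and Proposition \ref{varformautov} v) --- is not available here, since the lemma only assumes $\tilde\gl_1\ge 0$ and Proposition \ref{principaleigenvalue}(iv) allows $\gl_1^{\text{s}}(\gO(e))<0$. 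Once this step is justified, your argument is complete and coincides with the paper's.
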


\begin{lemma}\label{lemma4}  Suppose that $U$ is a solution of \eqref{modprob} with Morse index $\mu (U) \leq N-1$ and assume that the hypothesis i) of Theorem \ref{f'convessa} holds. Let  $Q^{e,s}$ be 
the quadratic form associated to the operator $L^{e,s} (V) = - \gD V + B^{e,s}V $ \ $B^{e,s}$ being defined in \eqref{DefCoeffSimm} :
\be \label{formaquadraticasimmetricalinearizzato}
\begin{split}
&Q^{e,s} (\Psi ; \gO ' ) = \int _{\gO '} \left [ |\nabla \Psi |^2 + B^{e,s}(\Psi ,\Psi )\right ] dx= \\
& \int _{\gO}\left [   \sum _{i=1}^m |\nabla \psi _i |^2 -\sum _{i,j=1}^m  \frac 12 \left ( \frac {\de f_i}{\de u_j}(|x|, U(x))+  \frac {\de f_i}{\de u_j}(|x|, U^{\gs _{e}}(x)) \right ) \psi _i \psi _j  \right ]  \, dx 
\end{split}
\ee
Then there exists a direction $e \in S^{N-1}$ such that 
$$
 Q^{e,s} (\Psi ; \gO (e) )  \geq 0
  \notag
$$
for any $\Psi \in C_c^1 (\gO (e);\Rm)$. Equivalently  
the first \emph{symmetric} eigenvalue $\gl _1^{\text{s}} (L^{e,s}, \gO (e))$ of the  operator $L^{e,s} (V) = - \gD V + B^{e,s}V $ in $\gO (e)$ is nonnegative (and hence also the principal eigenvalue $\tilde {\gl }_1 (L^{e,s} ,\gO (e) )$ is  nonnegative).
\end{lemma}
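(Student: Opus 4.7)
By contradiction: suppose $\gl_1^{\text{(s)}}(L^{e,s},\gO(e))<0$ for every $e\in S^{N-1}$. Hypothesis i) of Theorem \ref{f'convessa} transfers to the coefficients $B^{e,s}$: each off-diagonal entry $b^{e,s}_{ij}$ is non-positive (cooperativity), and whenever $\frac{\de f_{i_0}}{\de u_{j_0}}(|\cdot|, U(\cdot))>0$ on a set of positive measure, also $b^{e,s}_{i_0 j_0}<0$ there, so the symmetric system with matrix $B^{e,s}$ is fully coupled in $\gO(e)$. Proposition \ref{varformautov} then yields a simple first eigenvalue for $L^{e,s}$ on $\gO(e)$ with a strictly positive eigenfunction $\Phi_e$, normalized by $\|\Phi_e\|_{L^2(\gO(e);\Rm)}=1$.

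Extend each $\Phi_e$ by odd reflection across $T(e)$ to $\tilde\Phi_e\in H_0^1(\gO;\Rm)$. Since $B^{-e,s}=B^{e,s}$ and $\gO(-e)=\gs_e(\gO(e))$, the canonical positive normalization forces $\tilde\Phi_{-e}=-\tilde\Phi_e$; moreover, the rotational symmetry of $\gO$ (ball or annulus) permits transporting the cap eigenvalue problem to a fixed reference cap via a continuously varying rotation, after which standard continuous dependence of the simple first eigenpair on the coefficients yields continuity of $e\mapsto\tilde\Phi_e$ in $L^2(\gO;\Rm)$. Splitting $\int_\gO$ into $\gO(e)\cup\gO(-e)$ and applying the change of variable $x\mapsto\gs_e(x)$ in the second piece (using antisymmetry of $\tilde\Phi_e$) yields the key identity
\[
Q_U(\tilde\Phi_e;\gO)\;=\;2\,Q^{e,s}(\Phi_e;\gO(e))\;=\;2\,\gl_1^{\text{(s)}}(L^{e,s},\gO(e))\;<\;0.
\]

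Let $V\subset H_0^1(\gO;\Rm)$ denote the span of those eigenfunctions of the symmetric linearization $\tilde L_U$ whose eigenvalues are negative; by the definition of Morse index $\dim V=\gm(U)\leq N-1$ and $Q_U\geq 0$ on the $L^2$-orthogonal complement $V^\perp$. Put $g(e):=P_V\tilde\Phi_e\in V$. Then $g:S^{N-1}\to V$ is continuous and odd. If $g(e_0)=0$ for some $e_0$, then $\tilde\Phi_{e_0}\in V^\perp$, so $Q_U(\tilde\Phi_{e_0};\gO)\geq 0$, contradicting the display above. Hence $g$ is nowhere zero, and $e\mapsto g(e)/\|g(e)\|$ is an odd continuous map $S^{N-1}\to S(V)\cong S^{\dim V-1}$ with $\dim V-1\leq N-2$, which is forbidden by Borsuk--Ulam. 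The degenerate case $\gm(U)=0$ is immediate: then $V=\{0\}$ and $Q_U\geq 0$ on all of $H_0^1(\gO;\Rm)$, directly contradicting $Q_U(\tilde\Phi_e;\gO)<0$.

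The main technical obstacle I foresee is the continuity of $e\mapsto\tilde\Phi_e$ in $L^2(\gO;\Rm)$: both the caps $\gO(e)$ and the coefficients $B^{e,s}$ vary with $e$, so one must exploit the rotational symmetry of $\gO$ (which makes the caps mutually congruent) to reduce to a fixed reference cap and invoke standard perturbation theory for the simple first eigenpair. The averaging identity for $Q_U$ on antisymmetric extensions and the Borsuk--Ulam step are then routine. Note that the loss of one unit in the Morse index assumption (compared with Theorem \ref{fconvessa}, which allowed $\gm(U)\leq N$) is intrinsic to this argument: it is the inequality $\dim V-1\leq N-2$ that makes Borsuk--Ulam applicable.
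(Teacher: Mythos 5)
Your proposal is correct and is essentially the paper's own argument: the paper likewise takes the positive $L^2$-normalized first eigenfunction of $L^{e,s}$ on $\gO(e)$, extends it oddly to an antisymmetric function $\Phi^{e,s}$ with $\Phi^{-e,s}=-\Phi^{e,s}$, uses the identity $Q_U(\Phi^{e,s};\gO)=2\,Q^{e,s}(\phi^{e,s};\gO(e))=2\gl_1^{\text{s}}(L^{e,s},\gO(e))$, and applies Borsuk--Ulam (to the odd map $e\mapsto((\Phi^{e,s},\Phi_1)_{\ldue},\dots,(\Phi^{e,s},\Phi_j)_{\ldue})\in\R^{j}$, $j=\gm(U)\leq N-1$) to find a direction in which $\Phi^{e,s}$ is $L^2$-orthogonal to the negative eigenspace, forcing $\gl_1^{\text{s}}(L^{e,s},\gO(e))\geq 0$. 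Your contradiction framing and projection onto the negative eigenspace $V$ are only cosmetic variants of this, and your treatment of the technical points (full coupling of $B^{e,s}$ on the cap via cooperativity and the $\gs_e$-symmetry of the coefficients, continuity of $e\mapsto\tilde\Phi_e$) is at least as detailed as the paper's.
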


\begin{proof} 
Let us assume that  $1\leq j= \mu (U) \leq N-1 $ and let $\Phi _1, \dots , \Phi _j$ be mutually orthogonal eigenfunctions corresponding to the negative symmetric eigenvalues $\gl _1^{\text{s}} (L_U, \gO )$, \dots , $\gl _j^{\text{s}} (L_U, \gO )$ of the linearized operator $L_U (V) = - \gD V - J_F (x, U) V $ in $\gO $ .\\
 For any $e\in S^{N-1}$ let $\phi ^{e,s}$ be the first positive $L^2$ normalized eigenfunction  of the symmetric system associated to the linear operator 
 $L^{e,s}  $ in $\gO (e)$. We observe that $\phi ^{e,s}$ is uniquely determined since the corresponding system is fully coupled in $\gO (e)$.
 Let $\Phi ^{e,s}$ be the odd extension of $\phi ^{e,s}$ to $\gO$, and let us observe that  $\Phi ^{-e,s} = - \Phi ^{e,s}$, because $B^{e,s}$ is symmetric with respect to the reflection $\gs _e$. \\
 The mapping $e \mapsto \Phi ^{e,s} $ is a continuous odd function from $S^{N-1}$ to $\huno $,    
  therefore the mapping $h: S^{N-1}\to \R ^{j} $ defined by
 $$ h(e)= \left ( (\Phi ^{e,s} \, , \, \Phi _1 )_{\ldue (\gO )},   \dots , ( \Phi ^{e,s} \,, \, \Phi _j )_{\ldue (\gO )} \right )
 $$
 is an odd continuous mapping, and since $j \leq  N-1 $,  by the Borsuk-Ulam Theorem it must  have a zero.
 This means that there exists a direction $e\in S^{N-1}$ such that
   $\Phi ^{e,s}  $ is orthogonal to all the eigenfunctions $\Phi _1, \dots , \Phi _j$.   This implies that 
 $Q_U(\Phi ^{e,s}; \gO )  \geq 0 $, because $ \mu (U)=j $, and since $\Phi ^{e,s}$ is an odd function, we obtain that 
 $0 \leq Q_U(\Phi ^{e,s} ; \gO )  =  Q^{e,s} (\Phi ^{e,s}, \gO )= 2 Q^{e,s} (\phi ^{e,s}, \gO (e)) = 2  \gl _1^{\text{s}} (L^{e,s}, \gO (e)) $
 \end{proof}

\subsection{Proof of Theorems \ref{f'convessa} and Theorem \ref{corollario1}}
\begin{proof}[Proof of Theorem \ref {f'convessa}] 
By Lemma \ref{lemma4} there exists a direction $e$ such that 
the first symmetric eigenvalue $\gl _1^{\text{s}} (L^{e,s}, \gO (e))$ of the  operator $L^{e,s} (V) = - \gD V + B^{e,s}V $ in $\gO (e)$ is nonnegative, and hence also the principal eigenvalue $\tilde {\gl }_1 (L^{e,s} ,\gO (e) )$ is  nonnegative.\par 
By \eqref{CfrCoefficienti} and Proposition \ref{varformautov} ix), the first symmetric eigenvalue $\gl _1^{\text{s}} (- \gD +B^{e}, \gO (e))$ of the operator 
$- \gD +B^{e}$ in $ \gO (e)$, $B^{e}$ being defined in \eqref{DefCoeffSimm}, is also nonnegative.\\
If $\gl _1^{\text{s}} (- \gD +B^{e}, \gO (e)) >0$, then necessarily the difference $W^{e}=U-U^{\gs _{e}} $ must vanish. 
In fact, since it satisfies the equation 
\eqref{EquazDifferenza}, we get for the associated quadratic form
$$
Q^{e}  (W^{e} ; \gO (e)) = \int _{\gO (e)} \left [ |\nabla W^{e} |^2 + B^{e}(W^{e} ,W^{e} )\right ] dx =0 
$$
so by Proposition \ref{varformautov} $W^{e} =0$, and $U \equiv U^{\gs _{e}}$. 
 This implies that $B^{e}= B^{e,s}=J_{F}(|x|, U)$, so that we find a direction $e$ satisfying the hypotheses of Lemma \ref{lemma3}, and we get that  $U$ is  
 foliated Schwarz symmetric.\\
If instead $\gl _1^{\text{s}} (-\Delta +B^e, \gO (e))=0$, then necessarily $\gl _1^{\text{s}} (L^{e,s}, \gO (e))= \gl _1^{\text{s}} (- \gD +B^{e}, \gO (e))= 0$.\\
 Let us now remark for future use in the proof of Theorem \ref {corollario1} that if hypothesis b) of Theorem \ref{corollario1} holds then necessarily $W^{e}=0$, so that even in this case we find a direction 
 $e$ such that 
 $U$ is symmetric with respect to $T(e)$ and not only the principal eigenvalue $ \tilde {\gl} _1 (L_U, \gO (e'))$ of the linearized operator  in $\gO (e')$ is nonnegative, but also the first symmetric eigenvalue $ \gl _1 ^{\text{s}} (L_U,\gO (e)) = \gl _1 ^{\text{s}} (L_U,\gO (-e))  \geq 0 $.\\
In fact if 
$W^{e}$ would not vanish, it should be the first eigenfunction of the system \eqref{EquazDifferenza}, which by  Lemma \ref{lemma1}  is fully coupled. So it would be strictly positive (or negative) and by Lemma \ref{lemma1} and the hypothesis b) we would get
\begin{multline} 0= Q^{e} (W^{e}; \gO (e)) = \int _{\gO (e)} \left [ |\nabla W^{e} |^2 + B^{e}(W^{e} ,W^{e} )\right ] dx  \\
> \int _{\gO (e)} \left [ |\nabla W^{e} |^2 + B^{e,s}(W^{e} ,W^{e} )\right ] dx  = Q^{e,s}(W^{e}; \gO (e)) 
\end{multline}
 contradicting the nonnegativity of the first symmetric eigenvalue $\gl _1^{\text{s}} (L^{e,s}, \gO (e))$ of the  operator $L^{e,s} (V) = - \gD V + B^{e,s}V $ in $\gO (e)$.\\
Then the only case left is  when \ $\gl _1^{\text{s}} (-\Delta +B^e, \gO (e))=0$
,   hypothesis b) does not hold,  and $W^{e}$ does not vanish, so that it must be the first symmetric eigenfunction of the system  \eqref{EquazDifferenza}, which is fully coupled.  \\
 This implies that   it does not change sign in $\Omega (e)$ , and assuming that e.g. $U \geq U^{\gs _{e}} $ then, by the strong maximum principle   we have that $U > U^{\gs _{e}} $  in $\gO (e)$. \\
 We now apply, as in \cite{PW}, \cite{GPW} and \cite {DamPac}, the ''rotating plane method'', which is an adaptation of the moving plane method as developed in \cite {BN} and obtain a different direction $e'$ such that $U$ is symmetric with respect to $T(e')$ and 
the principal eigenvalue $ \tilde {\gl} _1 (L_U, \gO (e'))$ of the linearized operator  in $\gO (e')$ is nonnegative.
Then by Lemma \ref{lemma3} we will get that  $U$ is foliated Schwarz symmetric. \\
More precisely, without loss of generality we suppose that $e=(0,0,\dots ,1)$ and  for $\theta \geq 0 $ we set $e_{\theta}=(\sin \theta ,0,\dots ,\cos \theta) $, so that $e_0=e$, and
$\gO _{\theta}= \gO (e_{\theta})$, $U^{\theta}= U^{\gs _{e_{\theta}}}$, $W^{\theta}=U-U^{\gs _{e_{\theta}}} $, 
Let us define $\theta _0 = \sup \{ \theta \in [0, \pi ) : U > U ^{\theta} \text{ in } \gO _{\theta}  \}$.
Then necessarily $\theta _0 < \pi$, since $(U-U^0)(x) = - (U-U^{\pi})(\gs _{e_{\pi}}(x))$ for any $x \in \gO _0$ (and $\gs _{e_{\pi}}(x)) \in \gO _{\pi}$).\\
Suppose by contradiction that $U \not \equiv U^{\theta _0}$ in $\gO _{\theta _0}$.
Then, by Theorem  \ref{SMP} applied to    the difference $W^{\theta _0}=(U - U^{\theta _0})$,  we get that $W^{\theta _0}>0$ in $\gO _{\theta _0}$. Taking a compact $K \subset \gO _{\theta _0} $ whith small measure and such that (componentwise) 
$W^{\theta _0}>(\eta, \dots , \eta)$ for some $\eta >0$, for $\theta $ close to $\theta _0 $ we still have that $W^{\theta}>(\frac {\eta }2, \dots , \frac {\eta }2) $ in $K$, while 
$W^{\theta}>0$ in $\gO _{\theta} \setminus K$ by the weak maximum principle in domains with small measure. 
This implies that for $\theta $ greater than and close to $\theta _0 $
the inequality $U > U ^{\theta} \text{ in } \gO _{\theta} $ still holds, contradicting the definition of $\theta _0$. 
Therefore $U \equiv U^{\theta _0}$ in $\gO _{\theta _0}$.\\
Observe that the difference $ W^{\theta} $ satisfies the linear system \eqref{EquazDifferenza} and does not change sign for any $\theta \in [0, \theta _0)$, which implies by Proposition \ref{principaleigenvalue} that it is the principal eigenfunction for the system \eqref{EquazDifferenza} corresponding to the eigenvalue $\tilde {\gl} _1 =0 $.
As $\theta \to \theta _0 $ the system \eqref{EquazDifferenza} tends to the linearized system, in the sense that the coefficients $-b_{ij}(x)$ tend to the derivatives 
$\frac {\de f_i}{\de u_j}(|x|, U(x))$.  Then by continuity  the principal eigenvalue $\tilde {\gl } _1 (L_U, \gO (e_{\theta _0})) $ of the linearized operator $L_U (V) = - \gD V - J_F (x, U) V $ in $\gO (e_{\theta _0})$ is zero and the proof of Theorem \ref{f'convessa} is complete. 
\end{proof}
\begin{proof}[Proof of Theorem \ref {corollario1}] 
 To prove Theorem \ref{corollario1}, let us recall that, as observed above, if hypothesis b) holds then we can find a direction $e$ such that 
 $U$ is symmetric with respect to $T(e)$ and not only the principal eigenvalue $ \tilde {\gl} _1 (L_U, \gO (e))$ of the linearized operator  in $\gO (e)$ is nonnegative, but also the first symmetric eigenvalue $ \gl _1 ^{\text{s}} (L_U,\gO (e)) = \gl _1 ^{\text{s}} (L_U,\gO (-e))  \geq 0 $.\\
 The same happens if  $U$ is a Morse index one solution. 
 In fact in this case for any direction $e\in S^{N-1}$  at least one amongst $\gl _1^{\text{s}} (L_U, \gO (e))$  and 
$\gl _1^{\text{s}} (L_U, \gO (-e))$ must be nonnegative, otherwise taking the  corresponding first eigenfunctions we would obtain a $2$-dimensional subspace of  $C_c^1(\gO ; \Rm)$ where the quadratic form is negative definite, so in the symmetry direction $e$ found above we have that 
$ \gl _1 ^{\text{s}} (L_U,\gO (e)) = \gl _1 ^{\text{s}} (L_U,\gO (-e))  \geq 0 $.\par
After a rotation, we may assume that $e=e_2=(0,1,\dots,0)$ so that $T(e)\,=\,\{x\in \R^N\,:\,x_2=0\}$
and we may introduce new (cylinder) coordinates  
$(r, \theta, y_3, \dots ,y_N)$ defined by the relations $x= r [\cos \theta e_1 + \sin \theta e_2] + \sum_{i=3}^N y_i e_i $.   \\ 
Then the angular derivative $U_{\theta} $ of $U$ with respect to $\theta $, extended by zero at  the origin if $\gO $ is a ball, satisfies the linearized system, i.e.
\be \label{sistemalinearizzato}
- \gD U _{\theta} - J_F (|x|, U) U_{\theta} =0 \quad \text{ in } \gO(e_2).  
\ee
Moreover, by the symmetry of $U$ with respect to the hyperplane $T(e_2)$, we have that $U_{\theta}$ is antisymmetric with respect to $T(e_2)$ and therefore vanishes on $T(e_2)$ and since it  vanishes
on $\de \gO$, it vanishes on $\de \gO(e_2)$ as well.\\
If $ \tilde {\gl} _1 (L_U, \gO (e_2)) >0 $ then, by the maximum principle, the derivatives $U_{\theta} $, must vanish, and hence 
$U$ is radial. So if  $U_{\theta} \not \equiv 0 $ necessarily, from \eqref{sistemalinearizzato}, $ \tilde {\gl} _1 (L_U, \gO (e_2)) = \gl _1 ^{\text{s}} (L_U,\gO (e_2))  =0 $ and by v) of Proposition \ref{principaleigenvalue} 
$U_{\theta}$ is the first eigenfunction of the simmetrized system, as well as a solution of \eqref{sistemalinearizzato} (let us observe that by the symmetry of $U$ the linearized system \eqref{sistemalinearizzato} is fully coupled in $\gO(e_2)$).
So  we get that 
$J_F (|x|, U) U_{\theta}= \frac 12 \left (J_F (|x|, U) + J_F ^t (|x|, U) \right ) U_{\theta}$, i.e. 
  \eqref{superfullycoupling1} and if $m=2$, since $U_{\theta}$ is positive, we get    \eqref{superfullycoupling2}.
\end{proof}

 \section{  \textbf{Some examples} }
 A first type of elliptic systems that could be considered are those of ''gradient type'' (see \cite{deF2}), i.e. systems of the type \eqref{modprob}
 where $f_j(|x|,U)= \frac {\de g} {\de u_j}(|x|,U)$ for some function $g \in C^{2, \ga }([0,+\infty ) \times \Rm)$.\\
 In this case the solutions correspond to critical points of the functional
 $$ \Phi (u) = \frac 12 \int _{\gO } |\nabla U|^2 \, dx - \int _{\gO } g(|x|,U) \, dx
 $$
 in $\huno (\gO )$ and the linearized operator \eqref{linearizedoperator} coincides with the second derivative of $\Phi $.\par 
 Thus standard variational methods apply which often give solutions of finite (linearized) Morse index, as, for example, in the case  when the Mountain Pass Theorem can be used or when one considers the so called ''least energy'' nodal solutions. So, if the hypotheses of Theorem \ref{f'convessa} are satisfied, our symmetry results can be applied (see also \cite{DamPac}).\par
 \smallskip
 A second type of interesting systems  of two equations are the so called ''Hamiltonian type'' systems (see  \cite{deF2} and the references therein).
 More precisely we consider the system
 \be \label{hamiltoniansystem}
\begin{cases}  
 - \gD u_1  = f_1(|x|,u_1, u_2) & \text{ in } \gO  \\
 - \gD u_2 = f_2(|x|,u_1, u_2)   & \text{ in } \gO \\
 u_1=  u_2  =0 & \text { on } \de \gO 
\end{cases}
\ee
  with
  \be \label{hamiltonianconditions}
  f_1 (|x|,u_1, u_2)=  \frac {\de H} {\de u_2}(|x|,u_1,u_2) \quad , \quad f_2 (|x|,u_1, u_2)=  \frac {\de H} {\de u_1}(|x|,u_1,u_2)
  \ee
  for some scalar function $H \in C^{2, \ga }([0,+\infty ) \times \R ^2)$.
  These systems  can be studied by considering the associated functional
   \be J(U)=I(u_1, u_2)= \frac 12 \int _{\gO }  \nabla u_1 \cdot \nabla u_2  \, dx - \int _{\gO } H(|x|,u_1, u_2) \, dx
 \ee
 either in  $ \huno (\gO) $ or in other suitable Sobolev spaces (see \cite{deF2}). \par
 It is easy to see that the linearized operator defined in \eqref{linearizedoperator} does not correspond to the second derivative of the functional $J$, which is strongly indefinite. Nevertheless  solutions of \eqref{hamiltoniansystem} can have finite linearized Morse index  as shown in \cite {DamPac}. \par
 \smallskip
 In particular we consider the  system  
 \be \label{potenza}
 \begin{cases}
 - \Delta u_1 &= |u_2|^{p-1 } u_2  \text{ in }  \gO  \\
- \Delta u_2 &=  |u_1|^{q-1 } u_1  \text{ in }  \gO  \\
  u_1=u_2&=0 \text{ on } \de \gO 
 \end{cases}
 \ee
 where $1< p, q < \frac {N+2} {N-2}$.
 Then we  start from the case $p=q$ and the solution $u_1=u_2=z$, where $z$ is a scalar  solution of the equation
 \be \label{equazionescalare} 
  \begin{cases}
 - \Delta z &= |z|^{p-1 }z  \text{ in }  \gO  \\
 z&=0 \text{ on } \de \gO 
 \end{cases}
 \ee
 
 Let us observe that if $p=q$  and $z$  has Morse index equal to the integer $\mu (z)$, then $\mu (z)$ is also the Morse index  of the solution 
 $U=(u_1,u_2)=(z,z)$ of the system \eqref{potenza}. Indeed the linearized equation at $z$ for the equation \eqref{equazionescalare} and the linearized system at $(z,z)$ for the system \eqref{potenza} are respectively
  \be \label{linearizzatoequazionescalare} 
  \begin{cases}
 - \Delta \phi  - p |z|^{p-1 } \phi & =0  \text{ in }  \gO  \\
  \phi &=0 \text{ on } \de \gO 
 \end{cases}
 \ee
and
 \be \label{linearizzatopotenza}
 \begin{cases}
  - \Delta \phi _1  - p |z|^{p-1 } \phi _2 & =0  \text{ in }  \gO  \text{ in }  \gO  \\
 - \Delta \phi _2  - p |z|^{p-1 } \phi _1 & =0  \text{ in }  \gO   \text{ in }  \gO  \\
  \phi _1=\phi _2 &=0 \text{ on } \de \gO 
 \end{cases}
 \ee
This implies that the eigenvalues of these two operators are the same, since if $\phi $ is an eigenfunction for \eqref{linearizzatoequazionescalare} 
corresponding to the eigenvalue $\gl _k $ then taking 
$\phi _1 = \phi _2 = \phi $ we obtain an eigenfunction $(\phi _1 , \phi _2)$ for \eqref{linearizzatopotenza} corresponding to the same eigenvalue, while if 
 $(\phi _1 , \phi _2)$  is an eigenfunction for  \eqref{linearizzatopotenza} corresponding to the eigenvalue $\gl _k $ then $\phi = \phi _1 + \phi _2 $ is an 
 eigenfunction for \eqref{linearizzatoequazionescalare} corresponding to the same eigenvalue.\par 
 So if we start from a nondegenerate solution of \eqref{equazionescalare} with a fixed exponent 
 $\overline {p} \in (1, \frac {N+2} {N-2} )$, using the Implicit Function Theorem,  we find a branch of  solutions of \eqref{potenza} corresponding to (possibly different) exponents $p,q$ close to $\overline {p} $.
 For example if we start with a least energy nodal solution $z$ in the ball of equation \eqref{equazionescalare} with the exponent $\overline {p} $,  knowing that its Morse index is two  we get a branch of Morse index two solutions for  $p,q $ close to $\overline {p} $.\par 
 Note that, as proved in \cite{AP}, the least energy nodal solution of \eqref{equazionescalare} is not radial but foliated Schwarz symmetric. So it is obviously degenerate, but working in the space of axially symmetric functions 
  we could remove the degeneracy and apply the continuation method described above, if there are no other degeneracies. \par
Thus, starting from an exponent $\overline{p}$ for which the least energy nodal solution of \eqref{equazionescalare}  is not degenerate, we can construct solutions 
$U=(u_1, u_2)$ of \eqref{potenza} in correspondence of exponents $p,q$ close to $\overline{p}$, with Morse index two.
Then Theorem \ref{f'convessa} applies if $p,q \geq 2$ and, in particular, we get that the coupling condition \eqref{superfullycoupling2} holds, which in this case can be written as
  \be \label{superfullycoupling3}
  p|u_2|^{p-1} =  q|u_1|^{q-1}   \quad \text{in } \gO 
\ee 
 Note that more generally, by Theorem \ref{corollario1}, the equality \eqref{superfullycoupling3} must hold for every solution $U=(u_1,u_2)$ of \eqref{potenza} with Morse index $\mu (U) \leq N-1$,  giving  so a sharp condition to be satisfied by the components of a solution of this type.\par
 Let us remark that our results apply also when the nonlinearity depends on $|x|$ (in any way).
 Arguing as before it is not difficult to construct systems having solutions with low Morse index, in particular with Morse index one or two. \par
  An example could be the "Henon system"
  \be \label{Henon}
 \begin{cases}
 - \Delta u_1 = |x|^{\alpha }|u_2|^{p-1 } u_2 & \text{ in }  \gO  \\
 - \Delta u_2 =  |x|^{\beta } |u_1|^{q-1} u_1 &\text{ in }  \gO  \\
 u_1, u_2 >0  &\text{ in } \gO \\
  u_1=u_=0 & \text{ on } \de \gO 
 \end{cases}
 \ee
 with $\alpha , \beta >0$, $ p,q \geq 2 $. \par

\end{document}